\documentclass[a4paper,11pt]{amsart}

\usepackage{latexsym}
\usepackage{a4wide}
\usepackage{amscd}
\usepackage{graphics}
\usepackage{amsmath}
\usepackage{amssymb}
\usepackage{mathrsfs}
\usepackage{enumerate}
\usepackage{hyperref}
\usepackage{pgf,tikz}

\newcommand\N{\ensuremath{\mathbb{N}}}
\newcommand\Z{\ensuremath{\mathbb{Z}}}
\newcommand\C{\ensuremath{\mathbb{C}}}
\newcommand\R{\ensuremath{\mathbb{R}}}
\newcommand\RR{\ensuremath{\mathbb{R}}}
\newcommand\CC{\ensuremath{\mathbb{C}}}

\newcommand\NN{\ensuremath{\mathbb{N}}}

\newcommand{\eps}{\varepsilon}

\renewcommand{\Re}{\mathop{\mathfrak{Re}}}
\renewcommand{\Im}{\mathop{\mathfrak{Im}}}

\newtheorem{Theorem}{Theorem}[section]

\newtheorem{Lemma}[Theorem]{Lemma}
\newtheorem{Proposition}[Theorem]{Proposition}
 \theoremstyle{definition} \newtheorem{Definition}[Theorem]{Definition}

 \newtheorem{remark}[Theorem]{Remark}

\begin{document}

\title[Aharonov-Bohm operators with two colliding poles]{On {A}haronov-{B}ohm
  operators with two colliding poles}

\author{Laura Abatangelo}

\address{
\hbox{\parbox{5.7in}{\medskip\noindent
  L. Abatangelo\\
Dipartimento di Matematica e Applicazioni,
 Universit\`a di Milano--Bicocca, \\
Via Cozzi 55, 20125 Milano (Italy). 
         {\em{E-mail address: }}{\tt laura.abatangelo@unimib.it.}}}
}

\author{Veronica Felli}

\address{
\hbox{\parbox{5.7in}{\medskip\noindent
   V. Felli\\
Dipartimento di Scienza dei Materiali,
 Universit\`a di Milano--Bicocca, \\
Via Cozzi 55, 20125 Milano (Italy). 
         {\em{E-mail address: }}{\tt veronica.felli@unimib.it.}}}
}

\author{Corentin L\'ena }

\address{
\hbox{\parbox{5.7in}{\medskip\noindent
  C. L\'ena\\
Dipartimento di Matematica \emph{Giuseppe Peano},
 Universit\`a di Torino, \\
Via Carlo Alberto 10, 10123 Torino (Italy). 
         {\em{E-mail address: }}{\tt clena@unito.it.}}}
}

\date{\today}

\thanks{ The authors have been partially supported by the project ERC
  Advanced Grant 2013 n. 339958: ``Complex Patterns for Strongly
  Interacting Dynamical Systems --
  COMPAT''. V. Felli is partially supported by PRIN-2012-grant ``Variational and perturbative aspects of nonlinear differential problems''.\\
  \indent 2010 {\it Mathematics Subject Classification.}
  35P20, 35P15, 35J10.\\
  \indent {\it Keywords.} Asymptotics of eigenvalues, Aharonov-Bohm
  operators.}

\begin{abstract}
  We consider Aharonov-Bohm operators with two poles and prove sharp
  asymptotics for simple eigenvalues as the poles collapse at an
  interior point out of nodal lines of the limit eigenfunction.
\end{abstract}

\maketitle

\section{Introduction}

The present paper is concerned with asymptotic estimates of the
eigenvalue variation for magnetic Schr\"odinger operators with Aharonov--Bohm potentials.
These special potentials generate
localized magnetic fields, as they are produced by
infinitely long thin solenoids intersecting perpendicularly the plane
at fixed points (poles), as the radius of the solenoids goes to
zero and the magnetic flux remains constant.

The aim of the present paper is the investigation of
eigenvalues of these operators as functions of the poles on the
domain.  This study was initiated by the set of papers
\cite{AF,abatangelo2016leading,abatangelo2016boundary,BNNNT,NNT},
where the authors consider a single point moving in the domain,
providing sharp asymptotics as it goes to an interior point or to a
boundary point.  On the other hand, to the best of our knowledge the
only paper considering different poles is \cite{lena}, providing a
continuity result for the eigenvalues and an improved regularity for
simple eigenvalues as the poles are distinct and far from the
boundary.

Additional motivations for the study of eigenvalue functions of these
operators appear in the theory of spectral minimal partitions.  We
refer the interested reader to \cite{BNH2011,BNHHO2009,HHO13,NT} and
references therein.

For $a = (a_1, a_2) \in \RR^2$,
the Aharonov-Bohm magnetic potential with pole
$a$ and circulation $1/2$ is defined as 
\[
{\mathbf A}_a(x) = \frac12 \left( \frac{ - (x_2 - a_2)}{(x_1 - a_1)^2 + (x_2 -
a_2)^2} , \frac{x_1 - a_1}{(x_1 - a_1)^2 + (x_2 - a_2)^2} \right), 
\quad x=(x_1,x_2) \in \RR^2 \setminus \{a\}.
\]
In this paper we consider
potentials which are the sum of two different Aharonov--Bohm potentials
whose singularities 
are located at two different points in the domain moving towards each other. 
For $a>0$ small, let $a^-=(-a,0)$ and $a^+=(a,0)$ be the poles of the following Aharonov--Bohm potential
\begin{equation}\label{eq:11}
	{\mathbf A}_{a^-\!,a^+}(x):=
-{\mathbf A}_{a^-}+{\mathbf A}_{a^+}
= -\frac12\frac{(-x_2,x_1+a)}{(x_1+a)^2+x_2^2}+\frac12\frac{(-x_2,x_1-a)}{(x_1-a)^2+x_2^2}.
\end{equation}
 Let $\Omega$ be 
an open, bounded, and connected set in $\RR^2$ such that
$0\in\Omega$. 
We consider the Schr\"odinger operator 
\begin{equation}\label{eq:12}
H_{a^-\!,a^+}^{\Omega}= (i\nabla+{\mathbf A}_{a^-\!,a^+})^2
\end{equation}
with homogenous Dirichlet boundary conditions  (see
\S \ref{sec:gauge-invariance} for the notion of magnetic Hamiltonians) and
its eigenvalues $(\lambda_k^a)_{k\ge 1}$, counted with
multiplicities.
We denote by $(\lambda_k)_{k\ge 1}$ the eigenvalues of the Dirichlet
Laplacian $-\Delta$ in $\Omega$. As already mentioned, we know from  \cite{lena} that,
for every $k\geq1$, 
\begin{equation}\label{eq:16}
\lim_{a\to 0} \lambda_k^a=\lambda_k.
\end{equation}
The main result of the present paper is a sharp
asymptotics for the eigenvalue variation $\lambda_k^a-\lambda_k$ as the
two poles $a^-,a^+$ coalesce towards a point where the limit
eigenfunction does not vanish.

A first result in this direction was given in \cite{AFHL-1}, under a symmetry assumption on the domain.
\begin{Theorem}{\cite[Theorem 1.13]{AFHL-1}}\label{t:ab}
Let $\sigma:\RR^2\to \RR^2$,  $\sigma(x_1,x_2)=(x_1,-x_2)$.
 Let $\Omega$ be 
an open, bounded, and connected set in $\RR^2$,
satisfying $\sigma(\Omega)=\Omega$ and $0\in\Omega$. 
Let $\lambda_N$ be a simple eigenvalue of the Dirichlet Laplacian on $\Omega$
and $u_N$ be a $L^2(\Omega)$-normalized eigenfunction associated
to $\lambda_N$.
Let $k\in \NN\cup\{0\}$ be the order of vanishing of $u_N$ at $0$ and
$\alpha\in[0,\pi)$ be such that the minimal slope of
nodal lines of $u_N$ is equal to $\frac{\alpha}{k}$, so that 
\begin{equation*}
  u_N(r(\cos t,\sin t)) \sim r^{k} 
  \beta \sin(\alpha-kt)\quad\text{as $r\to0^+$ for all $t$},
 \end{equation*}
for some  $\beta\in\RR\setminus\{0\}$ (see e.g. \cite{FFT}). 
Let us  assume that $\alpha\neq0$.

For $a>0$ small, let $a^-=(-a,0)$, $a^+=(a,0)\in\Omega$,  and let $\lambda_N^a$
be the $N$-th
 eigenvalue for $(i\nabla +{\mathbf A}_{a^-\!,a^+})^2$. 
Then
\begin{equation*}
   \lambda_N^a - \lambda_N
 =\begin{cases}
   \frac{2\pi}{|\log a|}\, |u_N(0)|^2\,(1+o(1)), &\text{if }k=0,\\[4pt]
    C_k\pi\beta^2 a^{2k}\,\sin^2\alpha\,  (1+o(1)), &\text{if }k\ge1,
  \end{cases}
\end{equation*}
as $a\to 0^+$,   $C_k>0$ being a positive constant depending only on $k$.
\end{Theorem}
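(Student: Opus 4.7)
The plan is to combine a gauge-symmetry reduction, turning the magnetic operator into a slit Laplacian on $\Omega^+$, with a capacitary asymptotic analysis.

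\smallskip\noindent\textbf{Step 1: gauge trivialisation on the slit domain.}
Since the two circulations $\pm 1/2$ sum to zero, the $1$-form ${\mathbf A}_{a^-\!,a^+}$ is exact on $\Omega\setminus\Sigma_a$, where $\Sigma_a:=\{(t,0):|t|\le a\}$; indeed ${\mathbf A}_{a^-\!,a^+}=\nabla\phi_a$ with $\phi_a(z)=\tfrac12\arg\bigl((z-a)/(z+a)\bigr)$. The substitution $v:=e^{i\phi_a}u$ converts the magnetic eigenvalue problem on $\Omega$ into $-\Delta v=\lambda v$ on $\Omega\setminus\Sigma_a$, with Dirichlet data on $\partial\Omega$ and the antiperiodic jump $v|_+ = -v|_-$ across $\Sigma_a$ (since $e^{i\phi_a}$ picks up the factor $e^{i\pi}=-1$ on crossing the cut).

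\smallskip\noindent\textbf{Step 2: symmetry decoupling.}
The antiunitary involution $Tv(x):=\overline{v(\sigma x)}$ commutes with the slit Laplacian and preserves the jump condition. Since $\lambda_N$ is simple (and so is $\lambda_N^a$ for $a$ small, by \eqref{eq:16}), its eigenspace contains a representative that is either purely real and $\sigma$-even or purely imaginary and $\sigma$-odd. Restricted to $\Omega^+:=\Omega\cap\{x_2>0\}$, the first sector solves $-\Delta w=\lambda w$ with Dirichlet on $\partial\Omega\cap\overline{\Omega^+}$ and on $\Sigma_a$, and Neumann on the rest of $\Omega\cap\{x_2=0\}$; the second sector has the dual assignment. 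The $\sigma$-invariance of the nodal set at $0$ forces $\alpha\in\{0,\pi/2\}$, so the hypothesis $\alpha\ne 0$ selects $u_N$ $\sigma$-even (with $\alpha=\pi/2$ when $k\ge 1$), placing $\lambda_N^a$ in the Dirichlet-on-$\Sigma_a$ sector.

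\smallskip\noindent\textbf{Step 3: capacitary asymptotics.}
Imposing Dirichlet on the shrinking $\Sigma_a$ while the limit condition there is Neumann is a singular perturbation governed by a capacitary functional. Inserting $u_N - w_a$ into the Rayleigh quotient of the symmetric sector yields the upper bound
\[
\lambda_N^a - \lambda_N \le \frac{\int_{\Omega^+}|\nabla w_a|^2}{\|u_N\|_{L^2(\Omega)}^2}\bigl(1+o(1)\bigr),
\]
where $w_a\in H^1(\Omega^+)$ is the minimal-energy function whose trace on $\Sigma_a$ equals $u_N|_{\Sigma_a}$, cut off away from a fixed neighbourhood of $0$. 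For $k=0$, $w_a\simeq u_N(0)\,\psi_a$ with $\psi_a$ the planar capacitary potential of $\Sigma_a$; the classical formula for the logarithmic capacity of a segment of length $2a$ yields $\int|\nabla w_a|^2 = \tfrac{2\pi}{|\log a|}|u_N(0)|^2(1+o(1))$. For $k\ge 1$ the trace is $\beta\,x_1^k(1+O(x_1))$, and the blow-up $y=x/a$ rescales $w_a(ay)/a^k$ to the harmonic extension $W_k$ in $\RR^2_+$ of $\beta\,y_1^k$ from $[-1,1]$ with Neumann decay outside, giving $\int|\nabla w_a|^2 = C_k\pi\beta^2 a^{2k}(1+o(1))$, with $C_k>0$ computable from Chebyshev-type expansions; the factor $\sin^2\alpha=1$ then reinstates the general form stated.

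\smallskip\noindent\textbf{Main obstacle.} The delicate point is the matching lower bound: one must rule out that the actual minimiser of the constrained Rayleigh quotient achieves an energy strictly smaller than that produced by $w_a$. This requires an inner/outer matched asymptotic expansion of the true gauged eigenfunction $v^a$ near $\Sigma_a$, combined with a Feshbach-type projection onto the limit eigenspace and Almgren-monotonicity control of the blow-up profile. The logarithmic two-scale regime for $k=0$ and the polynomial-scaling regime for $k\ge 1$ need separate technical treatments, but in both cases the leading spectral coefficient must be identified as a genuine capacitary term and not an accidental near-cancellation of higher-order Taylor coefficients of $u_N$.
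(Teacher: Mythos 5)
First, note that the paper you are working from does not actually prove Theorem \ref{t:ab}: it is imported verbatim from \cite[Theorem 1.13]{AFHL-1}, and the present article only describes the method used there (gauge plus symmetry reduction of the magnetic problem to the Dirichlet Laplacian on $\Omega\setminus s_a$, followed by the spectral-stability/capacity analysis of \cite{AFHL-1}). Your Steps 1 and 2 reproduce exactly that reduction, and up to routine bookkeeping they are sound: the gauge $e^{\pm i\phi_a}$ turns the operator into the Laplacian with an antiperiodic jump across the segment, the simplicity of $\lambda_N$ together with the two commuting (anti)symmetries lets you choose a real eigenfunction in a definite $\sigma$-parity class, the hypothesis $\alpha\neq0$ forces the even class, and the even sector is the mixed problem with Dirichlet on $\Sigma_a$ and Neumann on the rest of the axis. (Two points you assert rather than prove, but which are standard: that the relevant eigenvalue of the reduced problem carries the label $N$ for small $a$ — this needs a continuity argument as in the proof of Theorem \ref{t:reduct-dirichl-lapl}, via \cite{Courtois1995Holes} and \eqref{eq:16} — and the $\sin^2\alpha$ factor, which is indeed trivial here since symmetry forces $\alpha=\pi/2$ when $k\ge1$.)

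The genuine gap is the one you name yourself at the end: Step 3 only produces an upper bound from a trial function, and the matching lower bound — which is where all the work lies — is left as an acknowledged ``main obstacle,'' with a list of techniques (matched asymptotics, Feshbach projection, Almgren monotonicity) but no argument. A proof whose hardest half is declared rather than carried out is not a proof. In the framework of this paper the missing ingredient is precisely Theorem \ref{t:one} (\cite[Theorem 1.7]{AFHL-1}): once the magnetic eigenvalue is identified with $\lambda_N(\Omega\setminus s_a)$, that theorem gives the asymptotics as a two-sided \emph{equality} $\lambda_N(\Omega\setminus K_\eps)-\lambda_N=2\pi u_N^2(0)/|\log(\mathop{\rm diam}K_\eps)|+o(\cdot)$, with $\mathop{\rm diam}s_a=2a$, so no separate lower bound is ever needed for $k=0$; for $k\ge1$ the sharp blow-up analysis identifying $C_k$ is likewise the substance of \cite{AFHL-1} and is absent from your sketch. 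To complete your proposal you would either have to prove a spectral-stability statement of the type of Theorem \ref{t:one} (which controls the eigenvalue shift from below by the relevant capacity, not just from above), or carry out the inner/outer expansion you allude to; as written, the leading-order coefficients in the statement are only shown to be upper bounds.
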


In the present paper, we are able to remove, in the case $k=0$
(i.e. when the limit eigenfunction $u_N$ does not vanish at the
collision point), the assumption on the symmetry 
of the domain, proving the following result.

\begin{Theorem}{\cite[Theorem 1.17]{AFHL-1}}\label{t:main}
 Let $\Omega$ be 
an open, bounded, and connected set in $\RR^2$ such that $0\in\Omega$. 
Let us assume that there exists $N\geq1$ such that the $N$-th
eigenvalue $\lambda_N$  of the Dirichlet Laplacian in $\Omega$ is 
simple. Let $u_N$ be a $L^2(\Omega)$-normalized eigenfunction 
associated to
$\lambda_N$. If $u_N(0)\neq0$ then 
\begin{equation*}
   \lambda_N^a - \lambda_N
 =\frac{2\pi\, u_N^2(0)}{|\log a|}(1+o(1))
\end{equation*}
as $a\to 0^+$.
\end{Theorem}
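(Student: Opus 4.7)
The plan is to reduce the magnetic problem to a scalar Laplacian via a gauge transformation, and then extract the leading correction from the Newtonian capacity of the segment $\Sigma_a:=\{(t,0):-a\le t\le a\}$ joining the two poles. Choose branches of the polar angles $\theta^\pm$ at $a^\pm$ so that $\phi^a:=\tfrac12(\theta^+-\theta^-)$ is smooth on $\Omega\setminus\Sigma_a$. One checks that $\nabla\phi^a={\mathbf A}_{a^-,a^+}$ on $\Omega\setminus\Sigma_a$ and that $\phi^a$ jumps by $\pi$ across $\Sigma_a$. The gauge substitution $u\mapsto v=e^{i\phi^a}u$ conjugates $H_{a^-,a^+}^\Omega$ to $-\Delta$ acting on the space of functions $v\in H^1(\Omega\setminus\Sigma_a)$ with $v=0$ on $\partial\Omega$ and the antisymmetric trace $v(x^+)=-v(x^-)$ on $\Sigma_a$; so $\lambda_N^a$ is the $N$-th eigenvalue of $-\Delta$ in this antisymmetric sector (equivalently, the antisymmetric sector on the double cover of $\Omega$ branched at $a^\pm$).

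\textbf{Upper bound.} Fix a ball $B_R\Subset\Omega$ centred at $0$ and let $\eta_a$ be the capacitary potential of $\Sigma_a$ in $B_R$: $\eta_a\equiv 1$ on $\Omega\setminus B_R$, $\eta_a=0$ on $\Sigma_a$, harmonic on $B_R\setminus\Sigma_a$. The Joukowski map $z\mapsto\tfrac12(z+\sqrt{z^2-a^2})$, sending $\mathbb{C}\setminus\Sigma_a$ conformally onto the exterior of the disc of radius $a/2$, yields
\begin{equation*}
\int_\Omega|\nabla\eta_a|^2\,dx=\mathrm{cap}_{B_R}(\Sigma_a)=\frac{2\pi}{|\log a|}\bigl(1+o(1)\bigr).
\end{equation*}
The trial function $v_a:=u_N\eta_a$ vanishes on both sides of $\Sigma_a$ (so is trivially antisymmetric) and on $\partial\Omega$. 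Using $-\Delta u_N=\lambda_N u_N$ and integrating by parts makes the cross term cancel, giving
\begin{equation*}
\int_\Omega|\nabla v_a|^2\,dx=\lambda_N\int_\Omega u_N^2\eta_a^2\,dx+\int_\Omega u_N^2|\nabla\eta_a|^2\,dx.
\end{equation*}
Since $u_N$ is continuous with $u_N(0)\neq 0$ and $|\nabla\eta_a|^2$ concentrates near $\Sigma_a$, the second integral equals $u_N^2(0)\,\mathrm{cap}_{B_R}(\Sigma_a)(1+o(1))$, while $\int u_N^2\eta_a^2\to 1$. Applying Courant--Fischer on $\mathrm{span}(\eta_a u_1,\dots,\eta_a u_N)$ delivers the upper bound.

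\textbf{Lower bound.} Let $u_N^a$ be an $L^2$-normalized eigenfunction and $v_N^a:=e^{i\phi^a}u_N^a$ its gauge transform. By compactness and \eqref{eq:16}, along a subsequence and up to a unit phase $v_N^a\to u_N$ strongly in $L^2(\Omega)$ and weakly in $H^1_{\mathrm{loc}}(\Omega\setminus\{0\})$. Testing $-\Delta v_N^a=\lambda_N^a v_N^a$ against $u_N\eta_a$ and $-\Delta u_N=\lambda_N u_N$ against $\overline{v_N^a}\,\eta_a$ (both admissible, vanishing on $\Sigma_a\cup\partial\Omega$), then subtracting cancels the symmetric term $\eta_a\nabla u_N\cdot\nabla\overline{v_N^a}$ and yields the clean identity
\begin{equation*}
(\lambda_N^a-\lambda_N)\int_\Omega u_N\overline{v_N^a}\,\eta_a\,dx=\int_\Omega\nabla\eta_a\cdot\bigl(u_N\nabla\overline{v_N^a}-\overline{v_N^a}\nabla u_N\bigr)\,dx.
\end{equation*}
Writing $v_N^a=u_N+r_a$, the antisymmetric trace condition forces $r_a(x^+)+r_a(x^-)=-2u_N(x)$ on $\Sigma_a$, a non-homogeneous Dirichlet datum of size $-u_N(0)$ to leading order. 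A capacitary analysis shows $r_a\sim -u_N(0)(1-\eta_a)$ in a neighbourhood of $\Sigma_a$; substituting into the right hand side gives $u_N^2(0)\int|\nabla\eta_a|^2(1+o(1))$, matching the upper bound.

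\textbf{Main obstacle.} The principal difficulty is the quantitative control of the error in the lower bound: the ``antisymmetric but non-vanishing'' character of $v_N^a$ on $\Sigma_a$ makes the classical Rauch--Taylor/Ozawa obstacle asymptotics inapplicable, and one must verify that the extra freedom does not improve on $\frac{2\pi u_N^2(0)}{|\log a|}$. Concretely, one needs the sharp rate $\|r_a\|_{L^2(\Omega)}=O(1/|\log a|)$ (so that Cauchy--Schwarz error terms in the identity above are genuinely $o(1/|\log a|)$, not merely $o(1/\sqrt{|\log a|})$). This sharp estimate comes from uniform $L^\infty$ and local regularity bounds on $v_N^a$ via the magnetic Sobolev theory, together with an Almgren-type monotonicity argument at the branch points $a^\pm$, exploiting the fact that $u_N$ has vanishing order zero at $0$ since $u_N(0)\neq 0$.
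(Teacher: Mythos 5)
Your gauge reduction to the antisymmetric-trace problem across the segment $\Sigma_a$ is correct, and your capacitary upper bound is essentially sound and parallel to the paper's Proposition \ref{propUpper} (which uses $e^{i\psi_a}\rho_{2a,\tau}u_j$ in place of $u_j\eta_a$); note only that in the Courant--Fischer step over ${\rm span}(\eta_a u_1,\dots,\eta_a u_N)$ the off-diagonal entries of the resulting $N\times N$ form are of the same order $1/|\log a|$ as the leading diagonal entry, so they must be handled as in Lemma \ref{l:tech} -- a fixable omission. The genuine gap is the lower bound. The step ``a capacitary analysis shows $r_a\sim -u_N(0)(1-\eta_a)$ in a neighbourhood of $\Sigma_a$'', together with the claimed sharp rate for $r_a$, is precisely the analytic core of the theorem and is asserted rather than proved. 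The antisymmetric condition $v^+=-v^-$ on $\Sigma_a$ does not force $v_N^a$ to be small on the segment, and the excess energy carried by a non-vanishing antisymmetric trace of size $u_N(0)$ on a cut of length $2a$ is itself of the critical order $1/|\log a|$, so no soft compactness or $L^\infty$ bound can rule it out: to evaluate the right-hand side of your identity one needs the local profile of $v_N^a$ near the collapsing segment with energy-norm precision $o(|\log a|^{-1/2})$ (equivalently, control of $\nabla r_a$ against $\nabla\eta_a$ near $\Sigma_a$), and neither ``magnetic Sobolev theory'' nor an unspecified Almgren-type monotonicity argument supplies this; moreover $\|r_a\|_{L^2}=O(1/|\log a|)$ is not even the relevant quantity for that term. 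In effect you would have to reprove, by hand and in a harder non-Dirichlet setting, the sharp small-obstacle asymptotics.

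The paper takes a different route exactly to avoid this eigenfunction expansion: it works with magnetic-real eigenfunctions, shows via the local structure of their nodal set and Euler's formula that near $0$ the nodal set is a single arc $K_a$ joining $a^-$ and $a^+$, deduces by gauge equivalence outside $K_a$ that $\lambda_N^a=\lambda_N(\Omega\setminus K_a)$ (Theorem \ref{t:reduct-dirichl-lapl}), and then invokes the two-sided asymptotics of Theorem \ref{t:one} in terms of ${\rm diam}\,K_a$, finally bootstrapping with the upper bound to get $|\log({\rm diam}\,K_a)|\sim|\log a|$. Your antisymmetric-sector formulation has the advantage that the cut is the fixed segment of diameter $2a$, so no diameter bootstrap is needed; but all the difficulty is then transferred to the unproven profile estimate for $v_N^a$ near $\Sigma_a$, so as it stands the lower bound is missing.
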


It is worthwhile mentioning 
that in \cite{lena} simple magnetic eigenvalues are proved to be
analytic functions of the configuration of the poles provided 
the limit configuration is made of interior distinct poles. 
A consequence of our result is that the latter assumption is even necessary
and simple eigenvalues are not analytic  in a neighborhood of
configurations of poles collapsing outside nodal lines of
the limit eigenfunction.

The proof of Theorem \ref{t:main} relies essentially on 
the characterization of the magnetic eigenvalue as
an eigenvalue of the Dirichlet Laplacian in $\Omega$ with a small set removed,
in the flavor of \cite{AFHL-1} (see \S \ref{sec:reduction}). 
In \cite{AFHL-1} only the case of symmetric domains was considered
and the magnetic problem was shown to be spectrally
equivalent to the eigenvalue problem for the Dirichlet Laplacian  in the domain obtained by removing
 the segment
joining the poles; in the general non-symmetric case, we can still derive a spectral
equivalence with a Dirichlet problem in the domain obtained by
removing from $\Omega$ the nodal lines of magnetic eigenfunctions
close to the collision point. 
The general shape of this removed set (which is not necessarily a
segment as in the symmetric case) creates some
further difficulties; in particular, precise information about the
diameter of such a set is needed in order to apply the following result
from \cite{AFHL-1}.

\begin{Theorem}{\cite[Theorem 1.7]{AFHL-1}}\label{t:one}
  Let $\Omega\subset\RR^2$ be a bounded connected open set containing $0$.
  Let $\lambda_N$ be a simple eigenvalue of the Dirichlet
  Laplacian in $\Omega$ and $u_N$ be a $L^2(\Omega)$-normalized
  eigenfunction  associated to $\lambda_N$   such that $u_N(0)\neq0$.  
Let $(K_{\eps})_{\eps>0}$ be a family of compact connected sets
  contained in $\Omega$ such that, for every $r>0$, there exists
  $\bar\eps$ such that $K_\eps\subseteq D_r$ for every
  $\eps\in(0,\bar\eps)$ ($D_r$ denoting the disk of radius $r$ centered at $0$). 
Then
\[
\lambda_{N}(\Omega\setminus K_\eps)-\lambda_{ N}= u_N^2(0)
\dfrac{2\pi}{|\log
(\mathop{\rm diam}K_\eps)|} +
o\bigg(\dfrac{1}{|\log (\mathop{\rm diam}K_\eps)|}\bigg), \quad \text{as }\eps\to 0,
\]
where $\lambda_{N}(\Omega\setminus K_\eps)$ denotes the $N$-th eigenvalue of the Dirichlet
  Laplacian in $\Omega\setminus K_\eps$. 
\end{Theorem}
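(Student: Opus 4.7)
The plan is to match upper and lower bounds for $\lambda_N^\eps := \lambda_N(\Omega\setminus K_\eps)$ in terms of a capacitary quantity associated with $K_\eps$, and then relate that capacity to $\mathrm{diam}\,K_\eps$. Fix $r>0$ with $\overline{D_r}\subset\Omega$ small enough that $K_\eps\subset D_r$ for all sufficiently small $\eps$. Let $w_\eps\in H^1_0(D_r)$ be the capacitary potential of $K_\eps$ in $D_r$, i.e., the unique minimizer of $\int_{D_r}|\nabla w|^2$ among $w\in H^1_0(D_r)$ with $w=1$ quasi-everywhere on $K_\eps$; equivalently, $w_\eps$ is harmonic in $D_r\setminus K_\eps$ with $w_\eps=1$ on $K_\eps$ and $w_\eps=0$ on $\partial D_r$. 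Set $c_\eps:=\int_{D_r}|\nabla w_\eps|^2$, the relative capacity of $K_\eps$ in $D_r$. After extension by $0$, $w_\eps\in H^1_0(\Omega)$.

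For the upper bound, the function $v_\eps := (1-w_\eps)u_N$ lies in $H^1_0(\Omega\setminus K_\eps)$ since it vanishes on $K_\eps$. One applies the min-max principle to the $N$-dimensional test space spanned by $v_\eps$ together with the $H^1_0(\Omega\setminus K_\eps)$-projections of the first $N-1$ Dirichlet eigenfunctions of $\Omega$. Expanding
\[
|\nabla v_\eps|^2 = (1-w_\eps)^2|\nabla u_N|^2 - 2(1-w_\eps)u_N\,\nabla u_N\cdot\nabla w_\eps + u_N^2|\nabla w_\eps|^2,
\]
and exploiting continuity of $u_N$ at $0$ together with $\int_{D_r}|\nabla u_N|^2\to 0$ as $r\to 0$, the dominant correction in the Rayleigh quotient is $u_N^2(0)\,c_\eps$, while the denominator is $1+o(1)$. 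Handling the cross-terms against lower eigenfunctions through a standard matrix-perturbation argument (justified by the spectral gap $\lambda_N-\lambda_{N-1}>0$ granted by simplicity) yields
\[
\lambda_N^\eps - \lambda_N \le u_N^2(0)\,c_\eps + o(c_\eps),\qquad \eps\to 0.
\]

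For the lower bound, let $u_\eps$ be the $L^2$-normalized eigenfunction associated with $\lambda_N^\eps$, extended by zero to $\Omega$. Spectral convergence gives $u_\eps\to u_N$ in $H^1_0(\Omega)$ (up to a sign) and also locally uniformly away from $\{0\}$. Write $u_N-u_\eps = u_N(0)\,w_\eps + \phi_\eps$; since $u_\eps$ vanishes on $K_\eps$, the function $u_N(0)\,w_\eps$ matches the trace of $u_N-u_\eps$ on $K_\eps$ to leading order, and $\phi_\eps$ is a lower-order remainder in $H^1_0(\Omega)$. Testing the eigenvalue equation for $u_\eps$ against $u_N$ on $\Omega\setminus K_\eps$, one derives an identity expressing $\lambda_N^\eps-\lambda_N$, modulo vanishing remainders, as $\int_\Omega|\nabla(u_N-u_\eps)|^2$. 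A variational lower bound then gives $\int_\Omega|\nabla(u_N-u_\eps)|^2\ge u_N^2(0)\,c_\eps(1+o(1))$ from the defining minimization property of $c_\eps$, and hence $\lambda_N^\eps-\lambda_N\ge u_N^2(0)\,c_\eps(1+o(1))$.

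Finally, to translate $c_\eps$ into a statement about $\mathrm{diam}\,K_\eps$, one invokes classical planar potential theory: P\'olya's inequality bounds the logarithmic capacity of a connected compactum $K\subset\RR^2$ of diameter $d$ between $d/4$ and $d/2$, and the standard relation between $H^1_0$-capacity in a disk and logarithmic capacity produces
\[
c_\eps = \frac{2\pi}{|\log\mathrm{diam}\,K_\eps|}\bigl(1+o(1)\bigr),\qquad \eps\to 0.
\]
Combining the two bounds with this asymptotic yields the claimed formula. The main obstacle is the lower bound: one must sharply control $u_\eps$ near $K_\eps$, where the geometry of $K_\eps$ is arbitrary among connected sets only constrained in diameter, and ensure that oscillations of $u_\eps$ against higher modes contribute only to the error term; the free parameter $r$ must finally be sent to $0$ after $\eps$, and the estimates must be uniform enough to allow this iterated limit.
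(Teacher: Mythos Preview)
The present paper does not contain a proof of this theorem: it is quoted verbatim from \cite{AFHL-1} and used as a black box in Section~\ref{sec:proof-theor-reft:m}. So there is no ``paper's own proof'' to compare against here; any assessment must be of your sketch on its own merits.

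Your overall strategy---capacitary potential, min--max upper bound, matching lower bound, then P\'olya's inequality to pass from capacity to diameter---is indeed the standard route for results of this type (and is, in outline, how \cite{AFHL-1} proceeds). The upper bound and the capacity--diameter step are fine as sketched.

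The genuine gap is in the lower bound. Two specific points:
\begin{enumerate}
\item The claimed identity ``$\lambda_N^\eps-\lambda_N$ equals, modulo vanishing remainders, $\int_\Omega|\nabla(u_N-u_\eps)|^2$'' is not what one gets by testing the equation for $u_\eps$ against $u_N$. Testing gives $(\lambda_N^\eps-\lambda_N)\int_\Omega u_\eps u_N = \int_\Omega \nabla u_\eps\cdot\nabla u_N - \lambda_N\int_\Omega u_\eps u_N$, and expanding $\int_\Omega|\nabla(u_N-u_\eps)|^2$ yields $\lambda_N+\lambda_N^\eps-2\int_\Omega\nabla u_N\cdot\nabla u_\eps$, which is \emph{not} $\lambda_N^\eps-\lambda_N$ plus lower order unless you have already controlled the cross term to the required precision. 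One actually needs a more careful energy identity, typically obtained by projecting $u_\eps$ onto the eigenbasis of $\Omega$ and isolating the $u_N$-component; the simplicity of $\lambda_N$ is used here to show that the coefficients along $u_j$, $j\neq N$, contribute only $o(c_\eps)$.
\item For the variational step, $u_N-u_\eps$ equals $u_N$ (not $u_N(0)$) on $K_\eps$. To invoke the minimizing property of $w_\eps$ you need a competitor equal to~$1$ on $K_\eps$, so you must replace $u_N-u_\eps$ by $(u_N-u_\eps)/u_N(0)$ and absorb the error $u_N(x)-u_N(0)=O(|x|)$ on $K_\eps$ into the $o(1)$. This is doable because $K_\eps\subset D_r$ with $r\to 0$, but you also need an a priori bound of the form $\|\nabla(u_N-u_\eps)\|_{L^2}^2=O(c_\eps)$ (which follows from the upper bound) to control the resulting cross terms; as written, your sketch is circular at this point.
\end{enumerate}
You correctly flag the lower bound as the main obstacle; the missing ingredient is an explicit a priori estimate $\lambda_N^\eps-\lambda_N=O(c_\eps)$ (from the upper bound) fed back into the spectral decomposition of $u_\eps$, rather than the heuristic ansatz $u_N-u_\eps=u_N(0)w_\eps+\phi_\eps$.
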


In order to apply Theorem \ref{t:one}, a crucial intermediate step in
the proof of Theorem \ref{t:main} is the estimate of the diameter of
nodal lines of magnetic eigenfunctions near the collision point.
More precisely, we prove that, when $a$ is sufficiently small, locally near $0$ 
suitable (magnetic-real) eigenfunctions have 
a nodal set consisting in a single regular curve connecting $a^-$ and $a^+$.
If $d_a$ denotes the diameter of such a curve, we obtain that 
\begin{equation}\label{eq:21}
\lim_{a\to 0^+}\dfrac{|\log a|}{|\log d_a|}=1,
\end{equation}
see \S \ref{sec:proof-theor-reft:m}.

The paper is organized as follows. In section
\ref{sec:estimates-from-above} we obtain some preliminary upper bounds
for the eigenvalue variation $\lambda_N^a-\lambda_N$ testing the
Rayleigh quotient for eigenvalues with proper test functions
constructed  by
suitable manipulation of limit eigenfunctions. In section
\ref{sec:gauge-invar-nodal} we prove that, as the two poles of the
 operator \eqref{eq:12} move towards each other colliding at $0$,
 then  $\lambda_N^a$ is equal to the $N$-th eigenvalue of the
 Laplacian in $\Omega$ with a small piece of nodal line of the
 magnetic eigenfunction removed. Combining the upper estimates of
 section \ref{sec:estimates-from-above} with Theorem \ref{t:one}, in
 section \ref{sec:proof-theor-reft:m} we  succeed in estimating the
 diameter of the removed small set as in 
\eqref{eq:21}; we then conclude the proof of Theorem \ref{t:main} by
combining \eqref{eq:21} and Theorem \ref{t:one}.

\section{Estimates from above}\label{sec:estimates-from-above}
 We denote by $\mathcal H_a$ the closure of
$C^{\infty}_{\rm c}(\Omega\setminus\{a^{+},a^{-}\},\CC)$ with respect to the norm
\[
\|u\|_{\mathcal{H}_a}=\left(\int_{\Omega}\left|(i\nabla+{\mathbf A}_{a^-\!,a^+})u\right|^2\,dx\right)^{1/2}.
\] 
We observe that, by Poincar\'e and diamagnetic
inequalities together with the Hardy type inequality proved in \cite{LW99}, 
$\mathcal H_a\subset H^1_0(\Omega)$ with continuous inclusion. In order
to estimate from above the eigenvalue $\lambda_N^a$, we recall the
well-known Courant-Fisher \emph{minimax characterization}:
\begin{equation}\label{eq:91_la}
  \lambda_N^a =\! \min\bigg\{\!\max_{u\in F\setminus \{0\}}
\dfrac{\int_{\Omega} |(i\nabla+{\mathbf A}_{a^-\!,a^+}) u|^2dx}{\int_{\Omega}
  |u|^2\,dx}:F \text{ is a subspace of $\mathcal H_a$, $\dim F=N$}\bigg\}.
\end{equation}

\begin{Lemma}
\label{lemCutOff}
Let $\tau\in(0,1)$. For every $0<\eps<1$, there exists a continuous
radial
cut-off function
 $\rho_{\eps,\tau}:\RR^2\to \RR$, such that $\rho_{\eps,\tau}\in
 H^1_{\rm loc}(\RR^2)$ and  
\begin{enumerate}[\rm (i)]\itemsep5pt
\item $0\le \rho_{\eps,\tau}(x)\le 1$ for all $x\in\R^2$;
\item $\rho_{\eps,\tau}(x)=0$ if $|x|\le \eps$ and $\rho_{\eps,\tau}(x)=1$ if $|x|\ge \eps^\tau$;
\item
  $\int_{\RR^2}|\nabla\rho_{\eps,\tau}|^2\,dx=\frac{2\pi}{(\tau-1)\log\eps}$;
\item $\int_{\RR^2}(1-\rho_{\eps,\tau}^2)\,dx=O\left(\eps^{2 \tau}\right)$ as
  $\eps\to 0^+$.
\end{enumerate}
      \end{Lemma}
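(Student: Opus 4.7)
The plan is to construct $\rho_{\eps,\tau}$ explicitly as a logarithmic interpolation between the circles of radii $\eps$ and $\eps^\tau$, since the radial Dirichlet integral is invariant under logarithmic scaling, which is exactly what makes property (iii) work out cleanly.

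Concretely, I would define
\[
\rho_{\eps,\tau}(x)=
\begin{cases}
0, & |x|\le \eps,\\[2pt]
\dfrac{\log|x|-\log\eps}{(\tau-1)\log\eps}, & \eps<|x|<\eps^\tau,\\[6pt]
1, & |x|\ge \eps^\tau.
\end{cases}
\]
Since $\tau\in(0,1)$ and $\log\eps<0$, the denominator $(\tau-1)\log\eps$ is strictly positive, so the middle branch takes values in $[0,1]$ and matches $0$ at $|x|=\eps$ and $1$ at $|x|=\eps^\tau$. This gives (i), (ii), continuity, and membership in $H^1_{\mathrm{loc}}(\RR^2)$.

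For (iii), a direct computation in polar coordinates using
\[
|\nabla\rho_{\eps,\tau}(x)|^2=\frac{1}{\bigl((\tau-1)\log\eps\bigr)^2}\,\frac{1}{|x|^2}
\quad\text{on }\{\eps<|x|<\eps^\tau\}
\]
yields
\[
\int_{\RR^2}|\nabla\rho_{\eps,\tau}|^2\,dx
=\frac{2\pi}{\bigl((\tau-1)\log\eps\bigr)^2}\int_{\eps}^{\eps^\tau}\frac{dr}{r}
=\frac{2\pi}{(\tau-1)\log\eps},
\]
since the radial logarithmic integral is exactly $(\tau-1)\log\eps$. For (iv), observe that $1-\rho_{\eps,\tau}^2$ is supported in the disk $\{|x|\le \eps^\tau\}$ and bounded by $1$ there, so
\[
\int_{\RR^2}\bigl(1-\rho_{\eps,\tau}^2\bigr)\,dx
\le \pi\eps^{2\tau}=O(\eps^{2\tau}).
\]

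There is no real obstacle here: the statement is an explicit construction plus two elementary integrals, and the only subtlety is tracking signs in $(\tau-1)\log\eps$ to make sure the normalization and the final formula in (iii) agree. The logarithmic profile is forced (up to the choice of endpoints) by the requirement that the Dirichlet energy behave like $1/|\log\eps|$; any polynomial interpolation would produce the wrong scaling.
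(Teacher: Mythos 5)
Your construction is exactly the one in the paper (the denominator $\log(\eps^\tau)-\log(\eps)$ used there equals your $(\tau-1)\log\eps$), and the verifications of (i)--(iv) proceed by the same elementary computations. The proposal is correct and takes essentially the same approach as the paper.
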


\begin{proof} We set
\begin{equation*}
  \rho_{\eps,\tau}(x)=
\begin{cases}
0, & \mbox{ if } |x| \le \eps,\\
\frac{\log|x|-\log(\eps)}{\log(\eps^\tau)-\log(\eps)},& \mbox{ if }  \eps< |x| <\eps^\tau,\\
1, & \mbox{ if }  |x|\ge \eps^\tau.
\end{cases}
\end{equation*}
The function $\rho_{\eps,\tau}$ is continuous and locally in $H^1$,
with $0 \le \rho_{\eps,\tau} \le 1$. 
The function $1-\rho_{\eps,\tau}^2$ is supported in the disk of
radius $\eps^\tau$ centered at $0$. 
We therefore have 
\[
\int_{\R^2}(1-\rho_{\eps,\tau}^2(x))\,dx \le \pi \eps^{2\tau},
\]
which proves (iv). We have $\nabla \rho_{\eps,\tau}(x)=0$ if $|x|<\eps$ or $|x|>\eps^\tau$, and 
\[
\nabla \rho_{\eps,\tau}(x)=\frac{x}{(\tau-1)\log(\eps)|x|^2}
\]
if $\eps<|x|<\eps^\tau$. From this we directly obtain identity (iii).
\end{proof}

\begin{Lemma}
\label{lemPhase}
 For all $a>0$, there exists a  smooth function $\psi_a: \R^2\setminus s_a\rightarrow \R$ satisfying 
\[
\nabla \psi_a ={\mathbf A}_{a^-\!,a^+},
\]
where $s_a$ is the segment in $\R^2$ defined by $s_a:=\{(t,0)\,:\,
-a\le t\le a\}\,$.
Furthermore, for every $x\in \R^2\setminus\{(0,0)\}$, $\lim_{a\to0^+}\psi_a(x)=0$.
\end{Lemma}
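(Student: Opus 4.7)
The plan is to construct $\psi_a$ explicitly as half the principal argument of a M\"obius transformation. Identifying $\RR^2\cong\CC$ via $z=x_1+ix_2$, I would set
\[
f_a(z):=\frac{z-a}{z+a},\qquad \psi_a(x):=\tfrac{1}{2}\,\mathrm{Arg}\,f_a(z),
\]
where $\mathrm{Arg}\colon\CC\setminus(-\infty,0]\to(-\pi,\pi)$ denotes the principal branch of the argument.

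The first step is to verify that $f_a$ maps $\RR^2\setminus s_a$ into the slit plane $\CC\setminus(-\infty,0]$, so that $\psi_a$ is well defined and smooth. Rationalizing the denominator gives
\[
f_a(z)=\frac{(x_1^2+x_2^2-a^2)+2iax_2}{(x_1+a)^2+x_2^2},
\]
so $f_a(z)\in\RR$ if and only if $x_2=0$, and on the real axis $f_a(x_1)=(x_1-a)/(x_1+a)$ is non-positive exactly for $x_1\in(-a,a]$. Adding the pole at $x_1=-a$, one sees that $s_a$ is precisely the preimage of the cut $(-\infty,0]$ (together with the pole sent to $\infty$). Hence $f_a$ is a smooth map from $\RR^2\setminus s_a$ into $\CC\setminus(-\infty,0]$, and $\psi_a$ is smooth there.

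Next I would check the gradient identity. Near every point $x_0\in\RR^2\setminus s_a$, choose a small disk $D$ on which continuous branches $\theta_\pm(z)$ of $\arg(z\mp a)$ are defined; then $\mathrm{Arg}\,f_a(z)=\theta_-(z)-\theta_+(z)+2\pi k$ on $D$ for some $k\in\ZZ$. Using the elementary identity
\[
\nabla_{x}\arg(z-b)=\frac{(-x_2,\,x_1-b)}{(x_1-b)^2+x_2^2}\qquad (b\in\RR),
\]
and comparing with the definition \eqref{eq:11}, one obtains
\[
\nabla\psi_a=\tfrac12\nabla\theta_-(z)-\tfrac12\nabla\theta_+(z)={\mathbf A}_{a^+}-{\mathbf A}_{a^-}={\mathbf A}_{a^-\!,a^+}.
\]

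For the pointwise limit, fix $x\in\RR^2\setminus\{(0,0)\}$ and let $z=x_1+ix_2\neq 0$. For all $a$ sufficiently small, $x\notin s_a$, and $f_a(z)\to 1$ as $a\to 0^+$; since $\mathrm{Arg}$ is continuous at $1$ with $\mathrm{Arg}(1)=0$, we conclude $\psi_a(x)\to 0$. The only mildly delicate point in the whole argument is Step~1, namely that $f_a^{-1}((-\infty,0])=s_a$; this is what makes the choice of $s_a$ (rather than some other curve joining $a^-$ and $a^+$) the natural one, and after it the gradient identity and the $a\to 0^+$ limit are routine.
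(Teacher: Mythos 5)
Your construction is correct, and it is more than the paper itself offers: the paper's ``proof'' of Lemma \ref{lemPhase} is just the citation ``See \cite[Lemma 3.1]{AFHL-1}'', so your argument supplies a genuinely self-contained verification. All three steps are sound: the computation $f_a(z)=\frac{(|z|^2-a^2)+2iax_2}{|z+a|^2}$ does show that $f_a^{-1}\bigl((-\infty,0]\bigr)$ together with the pole at $z=-a$ is exactly $s_a$, so $\psi_a=\tfrac12\,\mathrm{Arg}\,f_a$ is well defined and smooth precisely on $\R^2\setminus s_a$; the local decomposition into continuous branches of $\arg(z-a)$ and $\arg(z+a)$ gives the gradient identity via $\nabla_x\arg(z-b)=2\mathbf{A}_{(b,0)}(x)$; and $f_a(z)\to 1$ together with continuity of $\mathrm{Arg}$ at $1$ handles the final assertion. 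One typographical slip should be fixed: with your convention that $\theta_\pm$ is a branch of $\arg(z\mp a)$, the correct local identity is $\mathrm{Arg}\,f_a=\theta_+-\theta_-+2\pi k$ (not $\theta_--\theta_+$), and correspondingly $\nabla\psi_a=\tfrac12\nabla\theta_+-\tfrac12\nabla\theta_-={\mathbf A}_{a^+}-{\mathbf A}_{a^-}$. As written you have two compensating sign errors (the branches swapped in the decomposition, and then $\tfrac12\nabla\theta_-$ identified with ${\mathbf A}_{a^+}$ instead of ${\mathbf A}_{a^-}$), which cancel and yield the right final answer; after correcting that, the proof stands.
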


\begin{proof} See \cite[Lemma 3.1]{AFHL-1}.
\end{proof}

The first step in the proof of Theorem \ref{t:main} is the
following upper bound for the eigenvalue~$\lambda_N^a$.

\begin{Proposition}\label{propUpper}
For every $\tau\in(0,1)$
\begin{equation*}
	\lambda_N^a\le \lambda_N+\frac{2\pi}{(1-\tau)|\log a|}
\left(u_N^2(0)+o(1)\right)
\end{equation*}
as $a\to0^+$.
\end{Proposition}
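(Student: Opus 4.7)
The plan is to apply the Courant--Fischer characterization \eqref{eq:91_la} with the $N$-dimensional subspace of $\mathcal H_a$ obtained by ``cutting off'' and ``gauging'' the first $N$ Dirichlet eigenfunctions. Let $u_1,\dots,u_N$ be $L^2(\Omega)$-orthonormal eigenfunctions of $-\Delta$ associated with $\lambda_1\le\cdots\le\lambda_N$, take $\eps=a$ in Lemma \ref{lemCutOff}, and consider
\[
\tilde u_k(x)=e^{i\psi_a(x)}\,\rho_{a,\tau}(x)\,u_k(x),\qquad k=1,\dots,N.
\]
Each $\tilde u_k$ lies in $\mathcal H_a$ because $\rho_{a,\tau}$ vanishes on a disk containing both poles and $e^{i\psi_a}$ is well-defined off $s_a$; the assignment $(c_1,\dots,c_N)\mapsto \sum_k c_k\tilde u_k$ is injective by unique continuation for $w:=\sum_k c_k u_k$, so the $\tilde u_k$ span an $N$-dimensional $F_a\subset\mathcal H_a$. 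Using Lemma \ref{lemPhase} and the gauge cancellation $(i\nabla+{\mathbf A}_{a^-,a^+})(e^{i\psi_a}f)=i\,e^{i\psi_a}\nabla f$, for $v=\sum_k c_k\tilde u_k$ the magnetic Rayleigh quotient reduces to
\[
R(v)=\frac{\int_{\Omega}|\nabla(\rho_{a,\tau}w)|^{2}\,dx}{\int_{\Omega}\rho_{a,\tau}^{2}\,w^{2}\,dx}.
\]

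Expanding the numerator as $\int\rho_{a,\tau}^{2}|\nabla w|^{2}+2\int\rho_{a,\tau}w\,\nabla w\cdot\nabla\rho_{a,\tau}+\int w^{2}|\nabla\rho_{a,\tau}|^{2}$ and normalizing so that $\sum_k c_k^{2}=1$, I would use interior smoothness of $w$ near $0$ (so that $w$ and $\nabla w$ are uniformly bounded on the support of $\nabla\rho_{a,\tau}$) together with Lemma \ref{lemCutOff}(iii)--(iv) to obtain, uniformly on the unit sphere of $\RR^{N}$,
\[
\int \rho_{a,\tau}^{2}|\nabla w|^{2}\le \sum_{k=1}^{N}c_k^{2}\lambda_k,\qquad \int w^{2}|\nabla\rho_{a,\tau}|^{2}=\bigl(w(0)^{2}+o(1)\bigr)\frac{2\pi}{(1-\tau)|\log a|},
\]
together with the bounds $\bigl|2\int\rho_{a,\tau}w\,\nabla w\cdot\nabla\rho_{a,\tau}\bigr|=o(1/|\log a|)$ (via Cauchy--Schwarz combining Lemma \ref{lemCutOff}(iii)--(iv)) and $\int\rho_{a,\tau}^{2}w^{2}=1+O(a^{2\tau})$. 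Putting everything together,
\[
R(v)\le \lambda_N-\sum_{k=1}^{N}c_k^{2}(\lambda_N-\lambda_k)+\frac{2\pi}{(1-\tau)|\log a|}\bigl(w(0)^{2}+o(1)\bigr).
\]

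The main obstacle, and the step requiring real care, is to conclude from the previous inequality the desired bound $R(v)\le \lambda_N+\frac{2\pi}{(1-\tau)|\log a|}(u_N^{2}(0)+o(1))$ uniformly in $c$: indeed $w(0)^{2}=(\sum_k c_k u_k(0))^{2}$ may a priori exceed $u_N^{2}(0)$. To handle this I would exploit the strict spectral gap $\lambda_N-\lambda_{N-1}>0$, which is available because $\lambda_N$ is simple, via the following dichotomy. Fix $\delta>0$. If $c_N^{2}\ge 1-\delta$, then (possibly after changing the sign of $v$) $|c_N-1|\le\delta$ and $\sqrt{\sum_{k<N}c_k^{2}}\le\sqrt\delta$, so $|w(0)-u_N(0)|\le C\sqrt\delta$ and hence $w(0)^{2}\le u_N(0)^{2}+C'\sqrt\delta$. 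If instead $c_N^{2}<1-\delta$, then $\sum_{k}c_k^{2}(\lambda_N-\lambda_k)\ge(\lambda_N-\lambda_{N-1})\delta$ is a strictly positive constant independent of $a$, so for $a$ small enough it dominates the $O(1/|\log a|)$ correction, and the negative gap term absorbs the $w(0)^{2}$ contribution entirely. Letting first $a\to 0^{+}$ and then $\delta\to 0^{+}$, the minimax \eqref{eq:91_la} applied to $F_a$ delivers the claimed estimate.
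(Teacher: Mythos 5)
Your proposal is correct and follows the paper's overall strategy: test the min--max characterization \eqref{eq:91_la} on the $N$-dimensional space spanned by $e^{i\psi_a}\rho\, u_j$, $j=1,\dots,N$, and reduce to a finite-dimensional maximization over the coefficient sphere. Two points where your execution genuinely differs from the paper's are worth comparing. First, the paper integrates by parts exactly (Lemma \ref{lemIdentity}), so that the gradient term and the cross term combine into $\frac{\lambda_j+\lambda_k}{2}\int\rho^2 u_ju_k$ and there is no remainder to estimate; you instead keep the crude bound $\int\rho^2|\nabla w|^2\le\sum_k c_k^2\lambda_k$ and must kill the cross term separately. Here one must be careful: the naive Cauchy--Schwarz bound $2\left(\int\rho^2|\nabla w|^2\right)^{1/2}\left(\int w^2|\nabla\rho|^2\right)^{1/2}$ only gives $O(|\log a|^{-1/2})$, which is \emph{not} $o(|\log a|^{-1})$ and would destroy the leading constant; the correct route is to restrict the first factor to the annulus supporting $\nabla\rho_{a,\tau}$, whose area is $O(a^{2\tau})$, so that the product becomes $O(a^{\tau}|\log a|^{-1/2})=o(|\log a|^{-1})$ --- this is presumably what your ``combining (iii)--(iv)'' intends, but it should be spelled out since the obvious estimate fails. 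Second, the paper disposes of the maximization over the sphere by quoting the quadratic-form lemma \cite[Lemma 6.1]{AF} (Lemma \ref{l:tech} here), whereas your spectral-gap dichotomy ($c_N^2\ge 1-\delta$ versus $c_N^2<1-\delta$, then $a\to0^+$ followed by $\delta\to0^+$) is an elementary, self-contained substitute that proves exactly what that lemma delivers in this situation; it is a valid and arguably more transparent alternative. Two cosmetic remarks: the min--max runs over \emph{complex} combinations, so you should either note that the relevant Hermitian matrix has real entries (hence the maximum over the complex sphere equals that over the real sphere) or carry $|w|^2$ throughout; and taking the cut-off radius equal to $a$ places the poles exactly on the boundary of the set where the test function vanishes, making membership in $\mathcal H_a$ slightly less immediate than with the paper's choice $\rho_{2a,\tau}$ --- harmless, since $\log(2a)\sim\log a$.
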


The proof of Proposition \ref{propUpper} is based on estimates from
above of the  Rayleigh quotient for $\lambda_N^a$ computed at some
proper test functions constructed by suitable manipulation of limit
eigenfunctions.  To this aim, let us consider, for each
$j\in \{1,\dots,N\}$, a real eigenfunction $u_j$ of $-\Delta$ with
homogeneous Dirichlet boundary conditions
associated with $\lambda_j$, with
$\|u_j\|_{L^2(\Omega)}=1$. Furthermore, we choose these eigenfunctions so that
 \begin{equation}\label{eq:7}
\int_{\Omega}u_j{u_k}\,dx =0 \mbox{ for } j\neq k.
\end{equation}
 For $j\in \{1,\dots,N\}$ and $a>0$ small enough, we set 
\begin{equation}\label{eq:10}
v_{j,\tau}^a:=e^{i\psi_a}\rho_{2a,\tau}u_j.
\end{equation}
We have that $v_{j,\tau}^a \in \mathcal{H}_a$. 
Lemma \ref{lemCutOff} and the 
Dominated Convergence Theorem 
imply that $v_{j,\tau}^a $ tends to $u_j$ in $L^2(\Omega)$ when
$a\to 0\,^+$. This implies in particular that the functions $v_{j,\tau}^a $
are linearly independent for $a$ small enough.
 
Hence, for $a>0$ small enough, 
$E_{N,\tau}^a=\mathop{\rm
  span}\big\{v_{1,\tau}^a,\dots,v_{N,\tau}^a\big\}$ is an
$N$-dimensional subspace of $\mathcal H_a$, so that, in view of
\eqref{eq:91_la},
\begin{equation}\label{eq:2}
\lambda_N^a\leq \max_{u\in E_{N,\tau}^a\setminus\{0\}}\frac{\int_{\Omega} |(i\nabla+{\mathbf A}_{a^-\!,a^+}) u|^2dx}{\int_{\Omega}
  |u|^2\,dx}
=\frac{\int_{\Omega} |(i\nabla+{\mathbf A}_{a^-\!,a^+}) v_\tau^a|^2dx}{\int_{\Omega}
  |v_\tau^a|^2\,dx}
\end{equation}
with 
\begin{equation}\label{eq:1}
v_\tau^a=\sum_{j=1}^N \alpha_{j,\tau}^a v_{j,\tau}^a\quad\text{for
some }\alpha_{1,\tau}^a,\dots \alpha_{N,\tau}^a\in\C \text{ such
that }\sum_{j=1}^N|\alpha_{j,\tau}^a|^2=1.
\end{equation}
 
\begin{Lemma}\label{l:stimadenom} 
 For $a>0$ small, let $v_\tau^a$ be as in \eqref{eq:2}--\eqref{eq:1}
 above. Then
 \begin{equation}\label{eq:9}
   \int_{\Omega} |v_\tau^a|^2dx=1+O(a^{2\tau})
 \end{equation}
as $a\to0^+$.
 \end{Lemma}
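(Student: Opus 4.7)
The plan is to exploit the fact that $|e^{i\psi_a}|=1$ to reduce the computation to an integral involving only the real eigenfunctions $u_j$, and then use the $L^2$-orthonormality together with the cutoff estimate in Lemma \ref{lemCutOff}(iv).

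First, since $v_{j,\tau}^a = e^{i\psi_a}\rho_{2a,\tau}u_j$, the phase factor $e^{i\psi_a}$ (defined off the segment $s_a$, which has zero measure) can be pulled out of the sum, giving
\[
v_\tau^a = e^{i\psi_a}\rho_{2a,\tau}\sum_{j=1}^N \alpha_{j,\tau}^a u_j,
\qquad
|v_\tau^a|^2 = \rho_{2a,\tau}^2\,\Bigl|\sum_{j=1}^N \alpha_{j,\tau}^a u_j\Bigr|^2.
\]
Setting $w_\tau^a := \sum_{j=1}^N \alpha_{j,\tau}^a u_j$, the orthonormality relation \eqref{eq:7} combined with the normalization $\sum_{j=1}^N |\alpha_{j,\tau}^a|^2 = 1$ from \eqref{eq:1} gives $\int_\Omega |w_\tau^a|^2\,dx = \sum_{j=1}^N |\alpha_{j,\tau}^a|^2 = 1$.

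Next, I would write $\rho_{2a,\tau}^2 = 1 - (1-\rho_{2a,\tau}^2)$ and split
\[
\int_\Omega |v_\tau^a|^2\,dx = \int_\Omega |w_\tau^a|^2\,dx - \int_\Omega (1-\rho_{2a,\tau}^2)\,|w_\tau^a|^2\,dx
= 1 - \int_\Omega (1-\rho_{2a,\tau}^2)\,|w_\tau^a|^2\,dx.
\]
For the remainder term, I note that each $u_j$, being an eigenfunction of the Dirichlet Laplacian, is smooth in the interior (in particular bounded on a neighborhood of $0$ by elliptic regularity), so there is a constant $C$, independent of $a$ and of the coefficients, such that $\|w_\tau^a\|_{L^\infty(\Omega)}\le C\sum_{j=1}^N \|u_j\|_{L^\infty(\Omega)}$; combining this uniform bound with Lemma \ref{lemCutOff}(iv) applied to $\eps=2a$,
\[
\Bigl|\int_\Omega (1-\rho_{2a,\tau}^2)\,|w_\tau^a|^2\,dx\Bigr|
\le C^2 \int_{\R^2} (1-\rho_{2a,\tau}^2)\,dx = O(a^{2\tau}),
\]
which yields \eqref{eq:9}. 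There is no real obstacle: the argument hinges only on the pointwise identity $|e^{i\psi_a}|=1$, which makes the phase disappear from the modulus, and on the support condition of $1-\rho_{2a,\tau}^2$, which confines the error to a disk of radius $(2a)^\tau$ around the collision point.
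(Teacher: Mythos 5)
Your proposal is correct and follows essentially the same route as the paper: the phase $e^{i\psi_a}$ disappears from $|v_\tau^a|^2$, the orthonormality \eqref{eq:7} gives the leading term $1$, and the error is controlled by Lemma \ref{lemCutOff}(iv) together with the local boundedness of the eigenfunctions near $0$. The only cosmetic difference is that you package the sum as $|w_\tau^a|^2$ instead of expanding the double sum $\sum_{j,k}\alpha_{j,\tau}^a\overline{\alpha_{k,\tau}^a}\int_\Omega\rho_{2a,\tau}^2 u_ju_k\,dx$ as the paper does.
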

  \begin{proof}
Taking into account \eqref{eq:1}, \eqref{eq:10}, and \eqref{eq:7}, we
can write
\begin{align*}
  \int_{\Omega}
  |v_\tau^a|^2dx&=\sum_{j,k=1}^N\alpha_{j,\tau}^a\overline{\alpha_{k,\tau}^a}
                    \int_\Omega \rho_{2a,\tau}^2u_ju_k\,dx\\
&=1+\sum_{j=1}^N|\alpha_{j,\tau}^a|^2 \int_\Omega (\rho_{2a,\tau}^2-1)u_j^2\,dx
+\sum_{j\neq k}\alpha_{j,\tau}^a\overline{\alpha_{k,\tau}^a}
                    \int_\Omega (\rho_{2a,\tau}^2-1)u_ju_k\,dx.
\end{align*}
Hence the conclusion follows from Lemma \ref{lemCutOff} (iv). 
  \end{proof}
 \begin{Lemma}\label{lemIdentity} 
 For $a>0$ small, let $v_\tau^a$ be as in \eqref{eq:2}--\eqref{eq:1}
 above. Then
 \begin{multline}\label{eqIdentityLambdak}	
   \int_{\Omega} |(i\nabla+{\mathbf A}_{a^-\!,a^+}) v_\tau^a|^2dx\\
   =\sum_{j,k=1}^N \alpha_{j,\tau}^a\overline{\alpha_{k,\tau}^a}
   \left(\frac{\lambda_j+\lambda_k}{2}\int_{\Omega\setminus D_{2a}}
     \rho_{2a,\tau}^2u_ju_k\,dx+\int_{D_{(2a)^\tau}\setminus
         D_{2a} }u_ju_k\left|\nabla\rho_{2a,\tau}\right|^2\,dx\right),
   \end{multline}
 where, for all $r>0$, $D_r=\{(x_1,x_2)\in\R^2:x_1^2+x_2^2<r\}$
 denotes the disk of center $(0,0)$ and radius $r$.
 \end{Lemma}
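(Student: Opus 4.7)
The plan is to exploit the gauge transformation provided by Lemma \ref{lemPhase} in order to convert the magnetic Dirichlet integral into a standard one, then to expand bilinearly and handle the resulting cross-term by means of the weak form of the eigenvalue equation for the $u_j$.

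First I would observe that, since $\nabla\psi_a={\mathbf A}_{a^-\!,a^+}$ on $\RR^2\setminus s_a$, a direct computation gives the pointwise identity
\[
(i\nabla+{\mathbf A}_{a^-\!,a^+})\bigl(e^{i\psi_a}\phi\bigr)=i\,e^{i\psi_a}\,\nabla\phi
\]
for any $\phi$ smooth on $\Omega\setminus s_a$, hence $\bigl|(i\nabla+{\mathbf A}_{a^-\!,a^+})(e^{i\psi_a}\phi)\bigr|^2=|\nabla\phi|^2$. Applied to $\phi=\rho_{2a,\tau}w$ with $w=\sum_{j=1}^N\alpha_{j,\tau}^a u_j$, this reduces the left-hand side of \eqref{eqIdentityLambdak} to $\int_\Omega|\nabla(\rho_{2a,\tau}w)|^2\,dx$. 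The branch ambiguity of $\psi_a$ across $s_a$ is harmless because $\rho_{2a,\tau}$ vanishes identically on a neighbourhood of $s_a\subset D_{2a}$, so $v_{j,\tau}^a$ extends smoothly by zero there.

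Next I would expand bilinearly: since $u_j$ is real,
\[
\int_\Omega|\nabla(\rho_{2a,\tau}w)|^2\,dx
=\sum_{j,k=1}^N\alpha_{j,\tau}^a\overline{\alpha_{k,\tau}^a}
\int_\Omega\nabla(\rho_{2a,\tau}u_j)\cdot\nabla(\rho_{2a,\tau}u_k)\,dx,
\]
and for each pair $(j,k)$ I would write
\[
\nabla(\rho_{2a,\tau}u_j)\cdot\nabla(\rho_{2a,\tau}u_k)
=u_ju_k\,|\nabla\rho_{2a,\tau}|^2+\tfrac12\nabla(\rho_{2a,\tau}^2)\cdot\nabla(u_ju_k)+\rho_{2a,\tau}^2\,\nabla u_j\cdot\nabla u_k.
\]
The key step is then to dispose of the last two terms. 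Since $\rho_{2a,\tau}^2\in W^{1,\infty}(\Omega)$ and $u_k\in H^1_0(\Omega)$, the function $\rho_{2a,\tau}^2\,u_k$ is an admissible test function for the weak eigenvalue equation $\int_\Omega\nabla u_j\cdot\nabla\varphi\,dx=\lambda_j\int_\Omega u_j\varphi\,dx$; using it together with the symmetric choice $\rho_{2a,\tau}^2\,u_j$, I obtain after summation the identity
\[
\int_\Omega\tfrac12\nabla(\rho_{2a,\tau}^2)\cdot\nabla(u_ju_k)\,dx+\int_\Omega\rho_{2a,\tau}^2\,\nabla u_j\cdot\nabla u_k\,dx
=\frac{\lambda_j+\lambda_k}{2}\int_\Omega\rho_{2a,\tau}^2\,u_ju_k\,dx.
\]
Substituting and recalling that $\rho_{2a,\tau}\equiv0$ on $D_{2a}$ while $\nabla\rho_{2a,\tau}$ is supported in $D_{(2a)^\tau}\setminus D_{2a}$ yields \eqref{eqIdentityLambdak}.

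The computation is essentially mechanical once the gauge reduction is in place; the only point deserving care is the justification of the integration by parts. I would prefer to avoid distributional Laplacians of $\rho_{2a,\tau}^2$ (which is only Lipschitz) and instead use the weak eigenvalue equation as above, which makes the argument transparent and valid under the mere assumption that $u_j,u_k\in H^1_0(\Omega)$. No further regularity on $\partial\Omega$ beyond what is already implicit in the eigenvalue problem is needed.
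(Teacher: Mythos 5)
Your proof is correct and follows essentially the same route as the paper's: the gauge identity $(i\nabla+{\mathbf A}_{a^-\!,a^+})(e^{i\psi_a}\phi)=i e^{i\psi_a}\nabla\phi$ reduces the magnetic form to $\int_\Omega|\nabla(\rho_{2a,\tau}w)|^2\,dx$, which is then expanded via the same product-rule decomposition and the eigenvalue equation. The only (minor, and in fact slightly cleaner) difference is that you eliminate the cross term by testing the weak equation with the Lipschitz functions $\rho_{2a,\tau}^2u_k$ and $\rho_{2a,\tau}^2u_j$, whereas the paper integrates by parts on $\Omega\setminus D_{2a}$ and invokes $-\Delta u_j=\lambda_j u_j$ in the strong sense; both yield the identity $\tfrac12\int\nabla(\rho_{2a,\tau}^2)\cdot\nabla(u_ju_k)+\int\rho_{2a,\tau}^2\nabla u_j\cdot\nabla u_k=\tfrac{\lambda_j+\lambda_k}{2}\int\rho_{2a,\tau}^2u_ju_k$, and your variant avoids any discussion of boundary terms on the irregular $\partial\Omega$.
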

  \begin{proof} 
    Let us fix $j$ and $k$ in $\{1,\dots,N\}$ (possibly equal). We
    have that, in $\Omega\setminus D_{2a}$,
  \begin{align*}
(i\nabla+{\mathbf A}_{a^-\!,a^+})&v_{j,\tau}^a\cdot\overline{ (i\nabla+{\mathbf A}_{a^-\!,a^+})v_{k,\tau}^a}\\
&=\nabla (\rho_{2a,\tau}u_j)\cdot \nabla (\rho_{2a,\tau}u_k)\\
&=
 \rho_{2a,\tau}^2\nabla u_j\cdot \nabla u_k +u_j u_k \left|\nabla
  \rho_{2a,\tau}\right|^2
+\left(u_j\nabla u_k+u_k\nabla u_j\right)\cdot \rho_{2a,\tau}\nabla \rho_{2a,\tau},       
  \end{align*}
  and, since $\rho_{2a,\tau}\nabla \rho_{2a,\tau}=\frac12\nabla(\rho_{2a,\tau}^2)$,
  \begin{multline}  \label{eqDerivij}
  \int_{\Omega}
(i\nabla+{\mathbf A}_{a^-\!,a^+})v_{j,\tau}^a\cdot\overline{
  (i\nabla+{\mathbf A}_{a^-\!,a^+})v_{k,\tau}^a}\,dx= 
\int_{\Omega\setminus D_{2a}}\rho_{2a,\tau}^2\nabla u_j\cdot \nabla
u_k\,dx \\+
 \int_{D_{(2a)^\tau}\setminus
         D_{2a} }
u_j u_k \left|\nabla
  \rho_{2a,\tau}\right|^2\,dx+\frac12\int_{\Omega\setminus
         D_{2a} }\left(u_j\nabla u_k+u_k\nabla u_j\right)\cdot \nabla(\rho_{2a,\tau}^2)\,dx.       
  \end{multline}
 An integration by part on the last term of \eqref{eqDerivij} gives us
  \begin{multline*}
    \int_{\Omega}
    (i\nabla+{\mathbf A}_{a^-\!,a^+})v_{j,\tau}^a\cdot\overline{
    	(i\nabla+{\mathbf A}_{a^-\!,a^+})v_{k,\tau}^a}\,dx= 
    \int_{\Omega\setminus D_{2a}}\rho_{2a,\tau}^2\nabla u_j\cdot \nabla
    u_k\,dx \\+
    \int_{D_{(2a)^\tau}\setminus
    	D_{2a} }
    u_j u_k \left|\nabla
    \rho_{2a,\tau}\right|^2\,dx-\frac12\int_{\Omega\setminus
    	D_{2a} }\left(u_j\Delta u_k+2\nabla u_k\cdot\nabla u_j+\Delta u_j\, u_k\right)\rho_{2a,\tau}^2\,dx.
  \end{multline*}
After cancellations, we get
  \begin{multline}
  \label{eqScalarProdij}
  \int_{\Omega}
(i\nabla+{\mathbf A}_{a^-\!,a^+})v_{j,\tau}^a\cdot\overline{
  (i\nabla+{\mathbf A}_{a^-\!,a^+})v_{k,\tau}^a}\,dx\\= 
\frac{\lambda_k+\lambda_j}{2} \int_{\Omega\setminus
      D_{2a}}\rho_{2a,\tau}^2u_ju_k\,dx+
 \int_{D_{(2a)^\tau}\setminus
         D_{2a} }
u_j u_k \left|\nabla
  \rho_{2a,\tau}\right|^2\,dx.
\end{multline}
From \eqref{eq:1}, bilinearity, and \eqref{eqScalarProdij} we obtain
\eqref{eqIdentityLambdak}.
  \end{proof}
From \eqref{eq:2} and \eqref{eqIdentityLambdak} it follows that 
\begin{equation}\label{eq:4}
  \lambda_N^a-\lambda_N\leq\frac1 {\int_{\Omega}
  |v_\tau^a|^2\,dx}\left[\mathcal Q_a( \alpha_{1,\tau}^a,
  \alpha_{2,\tau}^a,\dots, \alpha_{N,\tau}^a)
+\lambda_N\left(1-\int_{\Omega}
  |v_\tau^a|^2\,dx \right)\right]
\end{equation}
where $\mathcal Q_a:\C^N\to\R$ is the quadratic form defined as
\begin{equation}\label{eq:QF1}
\mathcal Q_a(z_1,z_2,\dots,z_N)=\sum_{j,k=1}^N
M_{jk}^a
z_j\overline{z_k}
\end{equation}
with
\begin{equation}\label{eq:QF2}
M_{jk}^a
=
\frac{\lambda_j+\lambda_k}{2}\int_{\Omega\setminus D_{2a}}
     \rho_{2a,\tau}^2u_ju_k\,dx+\int_{D_{(2a)^\tau}\setminus
         D_{2a}
       }u_ju_k\left|\nabla\rho_{2a,\tau}\right|^2\,dx-\lambda_N \delta_{jk}
     \end{equation}
being $\delta_{jk}$ the Kronecker delta.

To estimate the largest eigenvalue of the quadratic form $\mathcal
Q_a$, we will use the following technical lemma.

\begin{Lemma}\label{l:tech}
  For every  $\eps>0$ let us consider a quadratic form 
\[
Q_\eps:\C^{N}\to \R,\quad 
Q_\eps(z_1,z_2,\dots,z_{N})=\sum_{j,k=1}^{N}m_{j,k}(\eps)z_j
\overline{z_k},
\]
with $m_{j,k}(\eps)\in\C$ such that
$m_{j,k}(\eps) =\overline{m_{k,j}(\eps)}$. Let us assume that there
exist real numbers $C>0$ and $K_1,K_2,\dots,K_{N-1}<0$ such that 
\begin{align*}
&m_{N,N}(\eps)=C\eps(1+o(1)) \text{ as }\eps\to 0^+,\\
& m_{j,j}(\eps)=K_j+o(1)\text{ as }\eps\to 0^+\text{ for all }j<N,\\
& 
m_{j,k}(\eps)=\overline{m_{k,j}(\eps)}=O(\eps)\text{ as }\eps\to
               0^+\text{ for all }j\neq k .
\end{align*}
Then 
\[
\max\bigg\{Q_\eps(z_1,\dots,z_N):(z_1,\dots,z_N)\in\C^N,\sum_{j=1}^N|z_j|^2=1\bigg\}
=C\eps(1+o(1))\quad\text{as }\eps\to0^+.
\]
\end{Lemma}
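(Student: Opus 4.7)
The plan is to establish the two matching inequalities separately: a lower bound by exhibiting a single test vector, and an upper bound by bounding the quadratic form on an arbitrary unit vector.

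For the lower bound, I would test $Q_\eps$ on the standard basis vector $e_N = (0,\dots,0,1) \in \CC^N$, which trivially satisfies $\sum_j |z_j|^2 = 1$. This yields $Q_\eps(e_N) = m_{N,N}(\eps) = C\eps(1+o(1))$, so the maximum is at least $C\eps(1+o(1))$.

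For the upper bound, let $(z_1,\dots,z_N) \in \CC^N$ satisfy $\sum_j |z_j|^2 = 1$, and write $s = \sum_{j<N} |z_j|^2$ and $t = |z_N|^2$, so $s+t=1$. I would decompose
\[
Q_\eps(z) = \sum_{j<N} m_{j,j}(\eps) |z_j|^2 + m_{N,N}(\eps) |z_N|^2 + 2\Re\!\!\sum_{j<k<N}\! m_{j,k}(\eps) z_j \overline{z_k} + 2\Re\sum_{j<N} m_{j,N}(\eps) z_j \overline{z_N}.
\]
The diagonal contribution from indices $j<N$ is at most $(K+o(1))s$ with $K := \max_{j<N} K_j < 0$; the $(N,N)$ entry contributes $C\eps(1+o(1))t$; and the off-diagonal entries among indices $<N$ are bounded by $O(\eps)\cdot s$ by Cauchy--Schwarz, $2|z_j||z_k|\leq |z_j|^2+|z_k|^2$.

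The delicate step is the last off-diagonal sum, which couples indices $j<N$ to $N$. A naive symmetric inequality $2|z_j||z_N| \le |z_j|^2 + |z_N|^2$ would produce an unwanted $O(\eps)\,t$ contribution, of the same order as the main term $C\eps\,t$, and would only give an upper bound of the form $O(\eps)$ rather than the sharp $C\eps(1+o(1))$. To avoid this, I would apply Young's inequality with a small parameter:
\[
2|m_{j,N}(\eps)|\,|z_j|\,|z_N| \le \eta\, |z_j|^2 + \eta^{-1} |m_{j,N}(\eps)|^2 |z_N|^2,
\]
and fix $\eta = -K/2 > 0$ independently of $\eps$. The first term is then absorbed into the strictly negative diagonal contribution $(K+o(1))s$, while the second becomes $O(\eps^2)\,t$, negligible compared to $C\eps\,t$. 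Collecting terms, one finds that for $\eps$ small the coefficient of $s$ is strictly negative and the coefficient of $t$ equals $C\eps(1+o(1))$, so $Q_\eps(z)\le C\eps(1+o(1))$ uniformly over unit vectors.

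The main obstacle is precisely this last step: the off-diagonal entries $m_{j,N}(\eps)$ have the same order $O(\eps)$ as the leading diagonal entry $m_{N,N}(\eps)$, so a symmetric Cauchy--Schwarz is too crude. The asymmetric weighted Young inequality, combined with the strict negativity of the limits $K_j$, is the decisive tool that keeps the sharp constant $C$ intact.
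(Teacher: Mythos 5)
Your proof is correct and complete: the lower bound via the test vector $e_N$ and the upper bound via the weighted Young inequality $2|m_{j,N}(\eps)||z_j||z_N|\le \eta|z_j|^2+\eta^{-1}|m_{j,N}(\eps)|^2|z_N|^2$ with $\eta$ fixed small (so that the $\eta|z_j|^2$ terms are absorbed by the strictly negative diagonal block and the remainder is $O(\eps^2)|z_N|^2$) together give $\max Q_\eps = C\eps(1+o(1))$, with all error terms uniform in the unit vector. The paper itself omits the proof, deferring to \cite[Lemma 6.1]{AF}; your argument is the standard one and correctly isolates the only delicate point, namely that a symmetric Cauchy--Schwarz on the coupling terms $m_{j,N}(\eps)z_j\overline{z_N}$ would produce an $O(\eps)|z_N|^2$ error of the same order as the leading term and lose the sharp constant.
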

\begin{proof}
  The result is contained in \cite[Lemma 6.1]{AF}, hence we omit the proof. 
\end{proof}

\begin{Lemma}\label{l:maxQF}
  For $a>0$ small, let $\mathcal Q_a:\C^N\to\R$ be the quadratic form
  defined in \eqref{eq:QF1}--\eqref{eq:QF2}. Then
\[
\max\bigg\{\mathcal
  Q_a(z_1,\dots,z_N):(z_1,\dots,z_N)\in\C^N,\sum_{j=1}^N|z_j|^2=1\bigg\}=
 \frac{2\pi u_N^2(0)}{(1-\tau)|\log(a)|} \left
     (1+o(1)\right)
\]
as $a\to 0^+$.
\end{Lemma}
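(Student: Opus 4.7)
The plan is to apply Lemma \ref{l:tech} to $\mathcal{Q}_a$ with the small parameter $\eps:=1/|\log a|$. To do this I need to check the three asymptotic conditions on the matrix entries $M_{jk}^a$ defined in \eqref{eq:QF2}, and identify the constant $C$. Throughout I will use that each eigenfunction $u_j$ is smooth in a neighborhood of $0\in\Omega$, hence bounded and continuous there; in particular $u_j(x)\to u_j(0)$ uniformly on $\overline{D_{(2a)^\tau}}$ as $a\to 0^+$.

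First I would handle the diagonal entry $M_{NN}^a$. Since $\rho_{2a,\tau}$ vanishes on $D_{2a}$, I rewrite
\[
\int_{\Omega\setminus D_{2a}}\rho_{2a,\tau}^2 u_N^2\,dx = 1-\int_\Omega(1-\rho_{2a,\tau}^2)u_N^2\,dx = 1+O(a^{2\tau})
\]
by Lemma \ref{lemCutOff}(iv) and boundedness of $u_N$ near $0$. Hence the $\lambda_N(\cdots)-\lambda_N$ piece contributes $O(a^{2\tau})$, which is $o(1/|\log a|)$. For the gradient term, the uniform continuity of $u_N^2$ on the small annulus $D_{(2a)^\tau}\setminus D_{2a}$ gives
\[
\int_{D_{(2a)^\tau}\setminus D_{2a}}u_N^2|\nabla\rho_{2a,\tau}|^2\,dx = (u_N^2(0)+o(1))\int_{\RR^2}|\nabla\rho_{2a,\tau}|^2\,dx,
\]
and by Lemma \ref{lemCutOff}(iii) the last integral equals $\tfrac{2\pi}{(1-\tau)|\log(2a)|} = \tfrac{2\pi}{(1-\tau)|\log a|}(1+o(1))$. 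Therefore $M_{NN}^a = C\eps(1+o(1))$ with $C=\tfrac{2\pi u_N^2(0)}{1-\tau}$, which is the coefficient appearing in the claim.

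Next, for $j<N$, the same computation on the $\lambda_j$ piece gives $\lambda_j\int_{\Omega\setminus D_{2a}}\rho_{2a,\tau}^2 u_j^2\,dx = \lambda_j + o(1)$, while the gradient term is $O(1/|\log a|)=o(1)$. Thus $M_{jj}^a = (\lambda_j-\lambda_N)+o(1)$, and simplicity of $\lambda_N$ together with $j<N$ gives $K_j:=\lambda_j-\lambda_N<0$. For the off-diagonal entries with $j\neq k$, orthogonality \eqref{eq:7} yields
\[
\int_{\Omega\setminus D_{2a}}\rho_{2a,\tau}^2 u_j u_k\,dx = -\int_\Omega(1-\rho_{2a,\tau}^2)u_j u_k\,dx = O(a^{2\tau}),
\]
and the gradient term is bounded by $\|u_j\|_\infty\|u_k\|_\infty\int|\nabla\rho_{2a,\tau}|^2 = O(1/|\log a|)$, so $M_{jk}^a = O(\eps)$. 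Hermitian symmetry $M_{jk}^a=\overline{M_{kj}^a}$ is immediate from $M_{jk}^a\in\RR$.

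With all three hypotheses of Lemma \ref{l:tech} verified, its conclusion gives the desired asymptotics. The only slightly delicate point, and the one I would be most careful about, is the passage from integrals over the annulus to the pointwise value $u_N^2(0)$: it hinges on the interior smoothness of Dirichlet eigenfunctions (so that $u_j$ is continuous at $0$) and on comparing $|\log(2a)|$ with $|\log a|$, which differ only by $\log 2$ and hence by a factor $1+o(1)$. Everything else is bookkeeping.
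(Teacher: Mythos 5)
Your proposal is correct and follows essentially the same route as the paper: it verifies the three hypotheses of Lemma \ref{l:tech} with $\eps=1/|\log a|$ by the same decompositions of $M_{NN}^a$, $M_{jj}^a$, and $M_{jk}^a$, using Lemma \ref{lemCutOff} and the orthogonality \eqref{eq:7}. The only cosmetic difference is that the paper quantifies the annulus error as $O(a^\tau)$ via $u_N^2(x)-u_N^2(0)=O(|x|)$, whereas you use uniform continuity to get $o(1)$; both suffice.
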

\begin{proof}
  Since $\int_\Omega u_N^2=1$, we can write
\[
   M_{NN}^a= 
\lambda_N\int_{\Omega}
     (\rho_{2a,\tau}^2-1)u_N^2\,dx+\int_{D_{(2a)^\tau}\setminus
         D_{2a}
       }u_N^2|\nabla\rho_{2a,\tau}|^2\,dx.
\]
Since $u_N\in L^\infty_{\rm loc}(\Omega)$, from Lemma \ref{lemCutOff} (iv) it follows that 
\[
\int_{\Omega}
     (\rho_{2a,\tau}^2-1)u_N^2\,dx=\int_{D_{(2a)^\tau}}
     (\rho_{2a,\tau}^2-1)u_N^2\,dx
=O(a^{2\tau})\quad\text{as
     }a\to0^+.
\]
Since $u_N\in C^\infty_{\rm loc}(\Omega)$ we have that
$u_N^2(x)-u_N^2(0)=O(|x|)$ as $|x|\to0^+$, then Lemma \ref{lemCutOff}
(iii) implies that
\begin{align*}
\int_{D_{(2a)^\tau}\setminus
         D_{2a}
       }&u_N^2|\nabla\rho_{2a,\tau}|^2\,dx\\
&=
u_N^2(0)\int_{D_{(2a)^\tau}\setminus
         D_{2a}
       }|\nabla\rho_{2a,\tau}|^2\,dx+\int_{D_{(2a)^\tau}\setminus
         D_{2a}
       }(u_N^2(x)-u_N^2(0))|\nabla\rho_{2a,\tau}(x)|^2\,dx\\
&=(u_N^2(0)+O(a^\tau)) \int_{D_{(2a)^\tau}\setminus
         D_{2a}
  }|\nabla\rho_{2a,\tau}|^2\,dx\\
&=\frac{2\pi}{(\tau-1)\log(2a)}(u_N^2(0)+O(a^\tau))
=\frac{2\pi}{(\tau-1)\log(a)}u_N^2(0) \left (1+o(1)\right)
\end{align*}
as $a\to0^+$.
Then 
\begin{equation}\label{eq:5}
   M_{NN}^a= \frac{2\pi}{(\tau-1)\log(a)}u_N^2(0) \left
     (1+o(1)\right)
\quad\text{as }a\to 0^+.
\end{equation}
For all $1\leq j <N$ we have that 
\begin{align*}
 M_{jj}^a&= 
\lambda_j\int_{\Omega\setminus D_{2a}}
     \rho_{2a,\tau}^2u_j^2\,dx+\int_{D_{(2a)^\tau}\setminus
         D_{2a}
       }u_j^2\left|\nabla\rho_{2a,\tau}\right|^2\,dx-\lambda_N\\
&=(\lambda_j-\lambda_N)+\lambda_j\int_{\Omega}
     (\rho_{2a,\tau}^2-1)u_j^2\,dx+\int_{\Omega}u_j^2\left|\nabla\rho_{2a,\tau}\right|^2\,dx
\end{align*}
and hence, since $u_j\in C^\infty_{\rm loc}(\Omega)$ and in view of
Lemma \ref{lemCutOff},
\begin{equation}\label{eq:6}
  M_{jj}^a= (\lambda_j-\lambda_N)+O\left(\frac1{|\log a|}\right)
= (\lambda_j-\lambda_N)+o(1)
 \quad\text{as }a\to 0^+.
\end{equation}
Moreover, for all $j,k=1,\dots,N$ with $j\neq k$, in view of
\eqref{eq:7} and Lemma \ref{lemCutOff} 
we have that 
\begin{equation}\label{eq:8}
  M_{jk}^a
  =
  \frac{\lambda_j+\lambda_k}{2}\int_{\Omega\setminus D_{2a}}
  (\rho_{2a,\tau}^2-1)u_ju_k\,dx+\int_{D_{(2a)^\tau}\setminus
    D_{2a}
  }u_ju_k\left|\nabla\rho_{2a,\tau}\right|^2\,dx=O\left(\frac1{|\log a|}\right) 
\end{equation}
as $a\to0^+$.

 In view of estimates \eqref{eq:5}, \eqref{eq:6}, and
\eqref{eq:8}, we have that $\mathcal Q_a$ satisfies the assumption of
Lemma \ref{l:tech} (with $\eps=\frac1{|\log a|}$), hence the
conclusion follows from Lemma \ref{l:tech}.
\end{proof}

\begin{proof}[Proof of Proposition  \ref{propUpper}]
Combining \eqref{eq:4}, Lemma \ref{l:maxQF}, and estimate \eqref{eq:9}
we obtain that
\begin{align*}
  \lambda_N^a-\lambda_N&\leq\frac1{1+O(a^{2\tau})}\left[
 \frac{2\pi u_N^2(0)}{(1-\tau)|\log(a)|} \left
     (1+o(1)\right)+O(a^{2\tau})\right]\\
&=\frac{2\pi u_N^2(0)}{(1-\tau)|\log(a)|} \left
     (1+o(1)\right)\quad\text{as }a\to0^+
\end{align*}
thus completing the proof.  
\end{proof}

\section{Gauge invariance, nodal sets and reduction to the
  Dirichlet-Laplacian}\label{sec:gauge-invar-nodal}

In the following, we mean by a \emph{path} $\gamma$ a piecewise-$C^1$
map $\gamma: I \mapsto \R^2$, with $I=[a,b]\subset \R$ a closed
interval. It follows from the definition of ${\mathbf A}_{a^-\!,a^+}$ (see \eqref{eq:11})
that for any closed path $\gamma$ (i.e. $\gamma(a)=\gamma(b)$)
\begin{equation}
 \label{eqWinding}
 \frac{1}{2\pi}\oint_{\gamma} {\mathbf A}_{a^-\!,a^+}\cdot d\mathbf{s}=
 \frac{1}{2}\mathop{\rm ind}\nolimits_{\gamma}(a^+)-\frac{1}{2}\mathop{\rm ind}\nolimits_{\gamma}(a^-),
\end{equation}
where $\mathop{\rm ind}\nolimits_{\gamma}(a^+)$ (resp. $\mathop{\rm
  ind}\nolimits_{\gamma}(a^-$))  is the winding number of $\gamma$ around $a^+$ (resp. $a^-$).

\subsection{Gauge invariance}\label{sec:gauge-invariance}
Let us give some results concerning the gauge invariance of our
operators. In view of applying them to several different situations,
we give statements valid for a magnetic Hamiltonian in an open and
connected domain $D$, without restricting ourselves to the
Aharonov-Bohm case.

 In the following, the term \emph{vector potential} (in  an open
 connected domain $D$)
 stands for a smooth real vector field ${\mathbf A}:D\to\R^2$. In
 order to define the quantum mechanical Hamiltonian for a particle in
 $D$, under the action of the magnetic field derived from the vector
 potential $\mathbf{A}$, we first consider the differential operator
\begin{equation*}
P=\left(i\nabla+\mathbf{A}\right)^2,
\end{equation*}
acting on smooth functions compactly supported in $D$. Using
integration by parts (Green's formula), one can easily see that $P$ is
symmetric and positive. This is formally the desired Hamiltonian, but
to obtain a self-adjoint Schr\"odinger operator, we have to specify
boundary conditions on $\partial D$, which we choose to be Dirichlet
boundary conditions everywhere. More specifically, our Hamiltonian is
the Friedrichs extension of the differential operator $P$. We denote
it by $H_{\mathbf{A}}^D$, and we call it the magnetic Hamiltonian on
$D$ associated with $\mathbf{A}$.

We observe that the Aharonov-Bohm operator $H_{a^-\!,a^+}^{\Omega}$
with poles $a^-=(-a,0)$, $a^+=(a,0)$ in $\Omega$ introduced in
\eqref{eq:12} can be defined as the magnetic Hamiltonian
$H_{\mathbf{A}_{a^-\!,a^+}}^{\dot\Omega}$ on $\dot\Omega$, where
$\dot\Omega=\Omega\setminus\{a^-,a^+\}$, and that the
  spectrum of $H_{a^-\!,a^+}^{\Omega}$ consists of the eigenvalues
  defined by \eqref{eq:91_la}. The space $\mathcal{H}_a$ is
  the form domain of $H^{\Omega}_{a_-,a^+}$.

\begin{Definition}\label{d:gauge-invariance}
  We call \emph{gauge function} a smooth complex valued function
  $\psi:D\to\C$ such that $|\psi|\equiv 1$.  To any gauge
  function $\psi$, we associate a \emph{gauge transformation} acting
  on pairs magnetic potential-function as
  $(\mathbf{A},u)\mapsto(\mathbf{A}^*,u^*)$, with
\begin{equation*}
\begin{cases}
&\mathbf{A}^*=\mathbf{A}-i\frac{\nabla\psi}{\psi},\\
&u^*= \psi u
\end{cases}
\end{equation*}
where $\nabla \psi=\nabla(\Re\psi)+i \nabla(\Im\psi)$. We notice that,
since $|\psi|=1$, $i\frac{\nabla\psi}{\psi}$ is a real vector field.
Two magnetic potentials are said to be gauge equivalent if one can be obtained from the other by a gauge transformation (this is an equivalence relation).
      \end{Definition}

The following result is a consequence \cite[Theorem 1.2]{Lei83}.

\begin{Proposition}\label{p:unieq}
	If $\mathbf{A}$ and $\mathbf{A}^*$ are two gauge equivalent vector potentials, the operators $H_{\mathbf{A}}^D$ and $H_{\mathbf{A}^*}^D$ are unitarily equivalent.
\end{Proposition}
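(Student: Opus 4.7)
The plan is to construct an explicit unitary operator that intertwines $H_{\mathbf{A}}^D$ and $H_{\mathbf{A}^*}^D$. Given a gauge function $\psi$ with $|\psi|\equiv 1$, I would define the multiplication operator $U_\psi:L^2(D)\to L^2(D)$ by $(U_\psi u)(x)=\psi(x)u(x)$. Since $|\psi|=1$ pointwise, $U_\psi$ is a unitary operator on $L^2(D)$ with inverse $U_\psi^{-1}u=\overline{\psi}\,u$, and it maps $C^\infty_{\rm c}(D)$ bijectively onto itself because $\psi$ is smooth and nowhere zero.

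The crucial step is a pointwise gauge-covariance identity for the magnetic gradient: for every $u\in C^\infty_{\rm c}(D)$,
\begin{equation*}
(i\nabla+\mathbf{A}^*)(\psi u)=\psi\,(i\nabla+\mathbf{A})u.
\end{equation*}
This follows by direct computation using the product rule and the definition $\mathbf{A}^*=\mathbf{A}-i\frac{\nabla\psi}{\psi}$, since the two terms involving $\nabla\psi$ cancel:
\begin{equation*}
i\nabla(\psi u)+\mathbf{A}^*\psi u=i\psi\nabla u+iu\nabla\psi+\mathbf{A}\psi u-iu\nabla\psi=\psi(i\nabla+\mathbf{A})u.
\end{equation*}
Combined with $|\psi|=1$, this yields the pointwise equalities $|(i\nabla+\mathbf{A}^*)(U_\psi u)|=|(i\nabla+\mathbf{A})u|$ and $|U_\psi u|=|u|$.

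Integrating these identities shows that the quadratic form associated with $H_{\mathbf{A}^*}^D$ evaluated at $U_\psi u$ coincides with that of $H_{\mathbf{A}}^D$ at $u$ for every $u\in C^\infty_{\rm c}(D)$. Passing to closures, $U_\psi$ restricts to a unitary bijection between the form domains of the two operators and identifies the two closed non-negative quadratic forms. Since the Friedrichs extension of a densely defined symmetric positive operator is uniquely determined by its closed semibounded quadratic form, one concludes that $U_\psi H_{\mathbf{A}}^D U_\psi^{-1}=H_{\mathbf{A}^*}^D$, which is precisely the claimed unitary equivalence. The only step of any substance is the pointwise identity above; the remaining obstacle is purely abstract, namely the well-known fact that the Friedrichs construction is compatible with unitary conjugation of the underlying form, which is exactly the content of the cited result of Leinfelder.
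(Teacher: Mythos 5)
Your argument is correct. Note, however, that the paper does not actually prove Proposition~\ref{p:unieq}: it simply records it as a consequence of \cite[Theorem 1.2]{Lei83}. What you have written is the standard self-contained proof that this citation stands in for: the multiplication operator $U_\psi u=\psi u$ is unitary because $|\psi|\equiv 1$, it preserves $C^\infty_{\rm c}(D)$, and the pointwise covariance identity $(i\nabla+\mathbf{A}^*)(\psi u)=\psi\,(i\nabla+\mathbf{A})u$ (your sign bookkeeping with $\mathbf{A}^*=\mathbf{A}-i\nabla\psi/\psi$ checks out) shows that $U_\psi$ identifies the two quadratic forms on this common core, hence their closures, hence the associated Friedrichs extensions. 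This is essentially the same computation the paper itself carries out later in \S\ref{sec:gauge-invariance} when constructing the conjugation $K$, where it verifies $(i\nabla+\mathbf{A})(\psi\bar u)=-\psi\overline{(i\nabla+\mathbf{A})u}$ and then passes from the core to the operator by density. So your route is not genuinely different in substance; it just makes explicit the abstract step (unitary equivalence of closed semibounded forms implies unitary equivalence of the associated self-adjoint operators) that the paper delegates to Leinfelder. The one point worth stating cleanly if you wanted a fully rigorous write-up is that $U_\psi$ maps the form core $C^\infty_{\rm c}(D)$ \emph{onto} itself (which holds since $\overline\psi$ is also a smooth unimodular function), so that the two form closures really are exchanged and not merely compared on a subspace.
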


The equivalence between two vector potentials (which is equivalent
to the fact that their difference 
is gauge-equivalent to $0$) can be determined using the following criterion.
\begin{Lemma}\label{lemEquivPot}
Let $\mathbf{A}$ be a vector potential in $D$. It is gauge equivalent to $0$ if and only if
\begin{equation}\label{eqIntCirc}
  \frac{1}{2\pi}\oint_{\gamma} \mathbf{A}(s)\cdot d\mathbf{s} \in\Z
\end{equation}  
for every closed path $\gamma$ contained in $D$.
\end{Lemma}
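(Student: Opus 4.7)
The statement is a two-way implication between the existence of a gauge equivalence $\mathbf{A}\sim 0$ and the quantization of the circulation of $\mathbf{A}$ along closed paths. My plan is to prove each direction separately, both by reducing to the fact that, on the unit circle in $\CC$, a continuous lifting of the argument is unique up to an integer multiple of $2\pi$.

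For the forward implication, I would start from the hypothesis that there is a gauge function $\psi:D\to\CC$, $|\psi|\equiv 1$, with $\mathbf{A}= i\nabla\psi/\psi$. Given a closed piecewise-$C^1$ path $\gamma:[a,b]\to D$, the composition $t\mapsto\psi(\gamma(t))$ is a continuous map of $[a,b]$ into the unit circle, so it admits a continuous lifting $\varphi:[a,b]\to\RR$ with $\psi(\gamma(t))=e^{i\varphi(t)}$; by closedness of $\gamma$, the quantity $\varphi(b)-\varphi(a)$ lies in $2\pi\ZZ$. Then a direct computation gives
\[
\oint_\gamma \mathbf{A}\cdot d\mathbf{s}=\int_a^b i\,\frac{\nabla\psi(\gamma(t))\cdot\gamma'(t)}{\psi(\gamma(t))}\,dt=\int_a^b i\cdot i\varphi'(t)\,dt=-(\varphi(b)-\varphi(a))\in 2\pi\ZZ,
\]
which yields \eqref{eqIntCirc} after dividing by $2\pi$.

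For the converse, assuming the quantization condition, I would construct $\psi$ explicitly by a path-integral procedure. Fix a base point $x_0\in D$ (possible since $D$ is connected), and, for any $x\in D$, pick a piecewise-$C^1$ path $\gamma_x$ from $x_0$ to $x$. Define
\[
\Phi(x)=\int_{\gamma_x} \mathbf{A}\cdot d\mathbf{s},\qquad \psi(x)=e^{-i\Phi(x)}.
\]
The key point is that $\psi$ is independent of the choice of path: if $\tilde\gamma_x$ is another path from $x_0$ to $x$, then $\gamma_x\ast\tilde\gamma_x^{-1}$ is a closed loop in $D$, so by \eqref{eqIntCirc} the difference between the two integrals belongs to $2\pi\ZZ$, and this difference disappears once we apply $e^{-i\cdot}$. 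Local smoothness of $\Phi$ and the relation $\nabla\Phi=\mathbf{A}$ (obtained by differentiating along small segments, for which no ambiguity arises) imply that $\psi$ is smooth with $|\psi|\equiv 1$ and
\[
i\frac{\nabla\psi}{\psi}=i\cdot\frac{-i(\nabla\Phi)\,\psi}{\psi}=\nabla\Phi=\mathbf{A},
\]
so that $\mathbf{A}^{\ast}=\mathbf{A}-i\nabla\psi/\psi=0$, showing that $\mathbf{A}$ is gauge equivalent to $0$.

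The only subtle point, and the reason the hypothesis is stated globally, is the well-definedness of $\psi$ in the converse direction: without the integer-valuedness of the normalized periods one could at best define $\psi$ locally, and the local factors would fail to patch on loops with non-trivial holonomy. The rest is formal manipulation; all the substance of the lemma is exactly this monodromy-vs-exactness dichotomy, which is why the argument reduces to checking that $e^{2\pi i n}=1$ for $n\in\ZZ$.
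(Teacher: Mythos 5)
Your proof is correct and follows essentially the same route as the paper: the forward implication via a (piecewise-$C^1$) lifting of $\psi\circ\gamma$ to the real line and the observation that closedness forces the increment of the lift to lie in $2\pi\ZZ$, and the converse via the path-independence of $\exp\bigl(-i\int_\gamma\mathbf{A}\cdot d\mathbf{s}\bigr)$ from a fixed base point, which yields the gauge function explicitly. No substantive differences to report.
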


  \begin{remark} The reverse implication in Lemma
    \ref{lemEquivPot} is contained in
    \cite[Theorem 1.1]{HHOO99}, for the Neumann boundary condition.
\end{remark}

\begin{proof} Let us first prove the direct implication. We assume that $\mathbf{A}$ is gauge equivalent to $0$, that is to say that there exists a gauge function $\psi$ such that 
\begin{equation*}
  \mathbf{A}\equiv i\frac{\nabla \psi}{\psi}.
\end{equation*}
Let us fix a closed path $\gamma: I=[a,b] \to D$ and consider the
mapping $z=\psi\circ \gamma$ from $I$ to $\mathbb{U}$, where
$\mathbb{U}=\{z \in \C\,:\, |z|=1\}$.  By the lifting property, there
exists a piecewise-$C^1$ function $\theta: I \to \R$ such that
$z(t)=\exp(i\theta(t))$ for all $t\in I$. This implies that
\begin{equation*}
	\nabla\psi(\gamma(t)) \cdot\gamma'(t) = \left(\psi\circ\gamma\right)'(t)=z'(t)=i\theta'(t)\exp(i\theta(t)),
\end{equation*} 
and therefore
\begin{equation*}
	i\frac{\nabla \psi}{\psi}\left(\gamma(t)\right) \cdot\gamma'(t)=-\theta'(t).
\end{equation*}
This implies that 
\begin{equation*}
	\oint_{\gamma} \mathbf{A}(s)\cdot d\mathbf{s}=\int_a^b i\frac{\nabla \psi}{\psi}\left(\gamma(t)\right)\cdot \gamma'(t)\,dt=-\int_a^b \theta'(t)\,dt=\theta(a)-\theta(b).
\end{equation*}
Since $\gamma$ is a closed path, $\exp(i \theta(a))=\exp(i \theta(b))$, and therefore
\begin{equation*}
\frac{\theta(a)-\theta(b)}{2\pi} \in\Z.
\end{equation*}  

Let us now consider the reverse implication. We define a gauge
function $\psi$ in the following way. We fix an (arbitrary) point
$X_0=(x_0,y_0)\in D$. Let us show that, for $X=(x,y)\in D$, the
quantity
\begin{equation*}
	\exp\left(-i\int_{\gamma}\mathbf{A}(s)\,d\mathbf{s}\right)
\end{equation*}
does not depend on the choice of a path $\gamma$ from $X_0$ to
$X$. Indeed, let $\gamma_1$ and $\gamma_2$ be two such paths, and let
$\gamma_3$ be the closed path obtained by going from $X_0$ to $X$
along $\gamma_1$ and then from $X$ to $X_0$ along $\gamma_2$. On the
one hand, we have
\begin{equation*}
  \oint_{\gamma_3}\mathbf{A}(s)\,d\mathbf{s}=
\int_{\gamma_1}\mathbf{A}(s)\,d\mathbf{s}-\int_{\gamma_2}\mathbf{A}(s)\,d\mathbf{s}.
\end{equation*} 
On the other hand, if \eqref{eqIntCirc} holds, we have
	\begin{equation*}
	\oint_{\gamma_3} \mathbf{A}(s)\,d\mathbf{s} \in2\pi\Z.
	\end{equation*}
This implies that 
\begin{equation*}
	\exp\left(-i\int_{\gamma_1}\mathbf{A}(s)\,d\mathbf{s}\right)=
	\exp\left(-i\int_{\gamma_2}\mathbf{A}(s)\,d\mathbf{s}\right).
	\end{equation*}
        By connectedness of $D$, there exists a path from $X_0$ to $X$
        for any $X\in \Omega$ (we can even choose it piecewise
        linear). We can therefore define, without ambiguity, a
        function $\psi: \Omega\to \C$ by
\begin{equation*}
\psi(X)= \exp\left(-i\int_{\gamma}\mathbf{A}(s)\,d\mathbf{s}\right).
\end{equation*}
It is immediate from the definition that $|\psi|\equiv 1$ and that $\psi$ is smooth, with
\begin{equation*}
	\nabla\psi(X)=-i\psi(X)\mathbf{A}(X).
\end{equation*} 	      
It is therefore a gauge function sending $\mathbf{A}$ to $0$. \end{proof}

Lemma \ref{lemEquivPot} can be used to define a set of eigenfunctions
for $H_{a^-\!,a^+}^{\Omega}$ having especially nice properties. It is
analogous to the set of real eigenfunctions for the usual
Dirichlet-Laplacian. To define it, we will construct a conjugation,
that is an antilinear antiunitary operator, which commutes with
$H_{a^-\!,a^+}^{\Omega}$. To simplify notation, we denote
$\mathbf{A}_{a^-\!,a^+}$ by $\mathbf{A}$ and $H^{\Omega}_{a^-\!,a^+}$ by
$H$ in the rest of this section.

According to \eqref{eqWinding}, the vector potential $2\mathbf{A}$
satisfies  condition \eqref{eqIntCirc} of Lemma \ref{lemEquivPot} on $\dot\Omega$,
and therefore is gauge equivalent to $0$. Therefore there  exists a
gauge function $\psi$ in $\dot\Omega$ such that
\begin{equation*}
	 2\mathbf{A}= -i\frac{\nabla\psi}{\psi}\quad\text{in }\dot\Omega.
\end{equation*}  
 We now define the antilinear antiunitary operator $K$ by
\begin{equation*}
	Ku= \psi \bar{u}.
\end{equation*}
For all $u \in C^{\infty}_0(\dot{\Omega},\C)$,
\begin{equation*}
  \left(i\nabla+ \mathbf{A}\right)(\psi \bar{u})=
  \psi\left(i\nabla+i\frac{\nabla\psi}{\psi}+\mathbf{A}\right)\bar{u}
  =\psi\left(i\nabla-\mathbf{A}\right)\bar{u}=-\psi\overline{\left(i\nabla+\mathbf{A}\right)u}.
\end{equation*}
The above formula, and the fact that $K$ is antilinear and antiunitary,
imply that, for all $u$ and $v$ in $C^{\infty}_0(\dot{\Omega},\C)$,
\begin{align*}
\langle K^{-1}HKu,v \rangle= \langle Kv, HKu\rangle
&=\int_{\Omega}\left(i\nabla +\mathbf{A}\right)(\psi \bar{v}) 
\cdot \overline{\left(i\nabla +\mathbf{A}\right)(\psi \bar{u})}\,dx\\
&=\int_{\Omega}\overline{\left(i\nabla +\mathbf{A}\right)v} \cdot \left(i\nabla +\mathbf{A}\right)u\,dx
= \langle Hu,v\rangle,
\end{align*}
where $\langle f,g\rangle=\int_\Omega f\bar g\,dx$ denotes the standard  scalar
product on the 
complex Hilbert space $L^2(\Omega,\C)$.
By density, we conclude that  
\[
K^{-1}HK=H.
\]

\begin{Definition}
  We say that a function $u\in L^2(\Omega,\C)$ is \emph{magnetic-real}
  when $Ku=u$.
\end{Definition}

Let denote by $\mathcal{R}$ the set of magnetic-real functions in
$L^2(\Omega,\C)$. The restriction of the scalar product to
$\mathcal{R}$ gives it the structure of a real Hilbert space. The
commutation relation $HK=KH$ implies that $\mathcal{R}$ is stable
under the action of $H$; we denote by $H^R$ the restriction of $H$ to
$\mathcal{R}$. There exists an orthonormal basis of $\mathcal{R}$
formed by eigenfunctions of $H^R$. Such a basis can be seen as a basis
of magnetic-real eigenfunctions of the operator $H$, in the complex
Hilbert space $L^2(\Omega,\C)$.
 
Let us now fix an eigenfunction $u$ of $H^R$ (or, equivalently, a
magnetic-real eigenfunction of $H$).  We define its \emph{nodal set}
$\mathcal{N}(u)$ as the closure in $\overline \Omega$ of the zero-set
$u^{-1}(\{0\})$. Let us describe the local structure of
$\mathcal{N}(u)$. In the sequel, by a \emph{regular curve} or 
\emph{regular arc} we mean a curve admitting a $C^{1,\alpha}$
parametrization, for some $\alpha\in(0,1)$.

\begin{Theorem}\label{thmNodalLoc}
 	The set $\mathcal{N}(u)$ has the following properties.
\begin{enumerate}[\rm (i)]
\item $\mathcal{N}(u)$ is, locally in $\dot{\Omega}$, a regular curve, except
  possibly at a finite number of singular points
  $\{X_j\}_{j\in\{1,\dots, n\}}$.
\item For $j\in \{1,\dots,n\}$, in the neighborhood of $X_j$,
  $\mathcal{N}(u)$ consists in an \emph{even} number of regular
  half-curves meeting at $X_j$ with equal angles (so that $X_j$ can be
  seen as a cross-point).
\item In the neighborhood of $a^+$ (resp. $a^-$), $\mathcal{N}(u)$
  consists in an \emph{odd} number of regular half-curves meeting at
  $a^+$ (resp. $a^-$) with equal angles (in particular this means that
  $a^+$ and $a^-$ are always contained in $\mathcal{N}(u)$).
\end{enumerate}		
\end{Theorem}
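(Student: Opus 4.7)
The plan is to reduce claims (i) and (ii) to the classical structure theorem for zero sets of real Laplacian eigenfunctions by using gauge invariance, and to handle (iii) via the known local asymptotic expansion of Aharonov--Bohm eigenfunctions at a half-integer pole.

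First I would work locally in $\dot{\Omega}$. Fix $X_0\in\dot{\Omega}$ and choose a simply connected open neighborhood $U\subset\dot{\Omega}$ of $X_0$ that winds around neither $a^-$ nor $a^+$. By \eqref{eqWinding}, the circulation of $\mathbf{A}_{a^-\!,a^+}$ around every closed path in $U$ vanishes, so by Lemma \ref{lemEquivPot} there exists a gauge function $\phi:U\to\C$ with $\mathbf{A}_{a^-\!,a^+}=i\nabla\phi/\phi$. Setting $v:=\phi u$, Proposition \ref{p:unieq} applied locally yields $-\Delta v=\lambda v$ on $U$, where $\lambda$ is the eigenvalue of $u$. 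Comparing $\phi$ with the global gauge $\chi$ (satisfying $2\mathbf{A}_{a^-\!,a^+}=-i\nabla\chi/\chi$) used to define $K$, one finds that $\chi\phi^2$ is locally constant of modulus $1$ on $U$; combined with the condition $Ku=u$, this forces $v$, up to a global constant phase, to be real-valued on $U$.

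Thus $v$ is a real classical (hence real-analytic) eigenfunction of $-\Delta$ on $U$, and claims (i) and (ii) follow from the classical Hartman--Wintner/Bers description of its zero set: at each zero $X\in U$ of $v$, the leading-order Taylor expansion is a nonzero homogeneous harmonic polynomial $P_k(y)=c\,r^k\cos\bigl(k(\theta-\theta_0)\bigr)$ of some degree $k=k(X)\geq 1$. For $k=1$, the nodal set is a $C^{1,\alpha}$ curve through $X$; for $k\geq 2$ it consists of exactly $2k$ regular half-curves meeting at $X$ with equal angular spacing $\pi/k$. Real-analyticity forces higher-order zeros to be locally discrete, and a global counting argument (e.g.\ a Courant-type nodal-domain bound) yields finiteness in $\dot{\Omega}$. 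Since $|\phi|\equiv 1$, the nodal set of $u$ on $U$ coincides with that of $v$, and the local structure transfers.

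The above gauge reduction fails at $a^{\pm}$, since every small loop around $a^+$ (resp.\ $a^-$) carries circulation $+\tfrac12$ (resp.\ $-\tfrac12$). For (iii) I would instead invoke the local asymptotic expansion for eigenfunctions of Aharonov--Bohm operators with half-integer pole developed in \cite{FFT}: in polar coordinates $(r,\theta)$ centered at $a^+$, the magnetic-real eigenfunction $u$ has leading behavior $r^{(2m+1)/2}$ times a half-integer-mode angular factor $A\cos\bigl(\tfrac{2m+1}{2}\theta\bigr)+B\sin\bigl(\tfrac{2m+1}{2}\theta\bigr)$, with $(A,B)\in\R^2\setminus\{0\}$ and integer $m\geq 0$ depending on $u$ and the pole; the reality of $(A,B)$ (up to a global phase) is enforced by $Ku=u$, and the whole is modulated by the double-valued gauge $e^{i\theta/2}$ making the product single-valued on the punctured disk. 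The angular factor has exactly $2m+1$ simple zeros in $[0,2\pi)$, equally spaced by $2\pi/(2m+1)$, and each produces a regular half-curve of $\mathcal{N}(u)$ emanating from $a^+$ with the announced angular symmetry; the $C^{1,\alpha}$ regularity of these arcs away from the pole follows from the gauge analysis of the previous paragraph on a slit disk centered at $a^+$. The same argument at $a^-$ completes the proof.

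The main obstacle is the global control of the singular set: \emph{a priori} singular points could accumulate at interior points, at $a^{\pm}$, or at $\partial\Omega$. Interior accumulation is excluded by real-analyticity; accumulation at $a^{\pm}$ is excluded by the definite leading order in the expansion above; accumulation at $\partial\Omega$ requires a separate argument, most cleanly via a topological/energy count relating the total number of singular zeros to the eigenvalue index $N$. The other non-routine ingredient is the precise local expansion at each pole, which is genuine work but is borrowed wholesale from \cite{FFT}.
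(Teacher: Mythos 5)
Your proposal is correct and follows essentially the same route as the paper: points (i)--(ii) are obtained by a local gauge transformation turning the magnetic-real eigenfunction into a real Laplacian eigenfunction (the reality coming from $Ku=u$ and the relation between the local gauge and the conjugation gauge), and then invoking the classical structure theorem for nodal sets. The only difference is at the poles: the paper gauges away $\mathbf{A}_{a^\mp}$ near $a^\pm$ and cites \cite[Theorem 1.5]{NT} as a black box, whereas you rederive that statement from the \cite{FFT}-type local expansion $r^{(2m+1)/2}$ times a half-integer antiperiodic angular mode — same underlying machinery, just unpacked one level further.
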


\begin{proof} 
The proof is essentially contained in \cite[Theorem 1.5]{NT}; 
for the sake of completeness we present a sketch of it.
Let the eigenfunction $u$ be associated with the
  eigenvalue $\lambda$, so that $Hu=\lambda u$. Let $x_0$ be a point
  in $\dot{\Omega}$. For $\eps>0$, we denote by $D(x_0,\eps)$ the open
  disk $\{x\,:\,|x-x_0|<\eps\}$. Let us show that we can find $\eps>0$
  small enough and a local gauge transformation
  $\varphi: D(x_0,\eps)\to \CC$ such that
  $\mathbf A^*=\mathbf A-i\frac{\nabla \varphi}{\varphi}=0$ and
  $u^*=\varphi u$ is a real-valued function in $D(x_0,\eps)$. Indeed,
  let us define, as before, the gauge function $\psi$ such that
  $2\mathbf A=-i\frac{\nabla \psi}{\psi}$. For $\eps>0$ small enough,
  we can define a smooth function $\varphi: D(x_0,\eps)\to \CC$ such
  that $\overline{\psi}(x)=(\varphi(x))^2$ for all $x\in D(x_0,\eps)$:
  take
\[
\varphi(x)=\exp\left(-\frac i2\mbox{arg}(\psi(x))\right)
\] 
with $\mbox{arg}$ a determination of the argument in
$\psi(D(x_0,\eps))$. A direct computation shows that, for
$x \in D(x_0,\eps)$,
\[
i\frac{\nabla
  \varphi(x)}{\varphi(x)}=\frac{i}2\frac{\nabla\overline{\psi}(x)}{\overline{\psi}(x)}
=\mathbf A(x).
\] 
The gauge transformation on $D(x_0,\eps)$ associated with $\varphi$
therefore sends $\mathbf A$ to $0$. Furthermore, since $u$ is
$K$-real, we have $\psi \bar u=\overline{\varphi^2 u}=u$ in
$D(x_0,\eps)$, and therefore $\overline{\varphi u}=\varphi u$. The
real-valued function $v=\varphi u$ satisfies $-\Delta v=\lambda v$,
and, since $|\varphi|\equiv 1$ on $D(x_0,\eps)$,
we have that $\mathcal N(v)\cap D(x_0,\eps)=\mathcal N(u)\cap D(x_0,\eps)$. Points
(i) and (ii) of Theorem \ref{thmNodalLoc} then follow from classical
results on the nodal set of Laplacian eigenfunctions (see for instance
\cite[Theorem 2.1]{HHOT09} and \cite[Theorem 4.2]{NT}).

To prove point (iii) of Theorem \ref{thmNodalLoc}, we use the
regularity result of \cite{NT} for the Dirichlet problem associated
with a one-pole Aharonov-Bohm operator. Indeed, let $\eps>0$ be small
enough so that $D=D(a^+,\eps)\subset \Omega$ and $a^-\notin D$. By
this choice of $\eps$, $\mathbf A_{a^-}=\nabla f$ on $D$, with $f$ a
smooth function, so that the domain $D$ and the magnetic potential
$\mathbf A$, restricted to $D$, satisfy the hypotheses of
\cite[Theorem 1.5]{NT}. The function $u$ is a solution of the Dirichlet problem
\begin{equation*}
\begin{cases}
  (i\nabla+\mathbf A)^2u-\lambda u=0,&\mbox{in }D,\\
  u=\gamma,&\mbox{on } \partial D,
\end{cases}
\end{equation*} 
with $\gamma=u_{|\partial D} \in W^{1,\infty}(\partial D)$. A direct application of \cite[Theorem 1.5]{NT} gives property (iii) around $a^+$. We obtain property (iii) around $a^-$ by exchanging the role of $a^+$ and $a^-$. \end{proof}

 \subsection{Reduction to the Dirichlet-Laplacian}\label{sec:reduction}
 Our aim in this subsection is to show that, as the two poles of the
 operator \eqref{eq:12} coalesce into a point at which $u_N$ does not
 vanish, then  $\lambda_N^a$ is equal to the $N$-th eigenvalue of the
 Laplacian in $\Omega$ with a small subset concentrating at $0$ removed.

 \begin{Theorem}\label{t:reduct-dirichl-lapl}
Let us assume that there exists $N\geq1$ such that the $N$-th
eigenvalue $\lambda_N$  of the Dirichlet Laplacian in $\Omega$ is 
simple. Let $u_N$ be a $L^2(\Omega)$-normalized eigenfunction associated
$\lambda_N$ and assume that $u_N(0)\neq0$.
Then, for all $a>0$ sufficiently small,  there exists
a compact connected set $K_a \subset \Omega$ such that 
 \begin{equation*}
\lambda_N^a=\lambda_N(\Omega\setminus K_a)
 \end{equation*}
and $K_a$ concentrates around $0$ as
 $a\to0^+$, i.e. for any $\eps>0$ there exists $\delta>0$ such that if
 $a< \delta$ then $K_a\subset D_\eps$.
   
 \end{Theorem}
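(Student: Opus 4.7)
The plan is to identify $K_a$ as (a relevant piece of) the nodal set of a magnetic-real eigenfunction associated to $\lambda_N^a$, and then to use the gauge invariance results of \S \ref{sec:gauge-invariance} to transform the magnetic problem on $\Omega\setminus K_a$ into a Dirichlet-Laplacian problem on the same open set.

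First, for every $a>0$ small I pick an $L^2$-normalized magnetic-real eigenfunction $\varphi_a$ of $H_{a^-\!,a^+}^{\Omega}$ associated to $\lambda_N^a$, which exists by the spectral decomposition of $H^R$ discussed in \S \ref{sec:gauge-invariance}. Since $\lambda_N^a\to \lambda_N$ by \eqref{eq:16} and Proposition \ref{propUpper}, and $\lambda_N$ is simple, a standard compactness/elliptic-regularity argument (testing the equation against smooth functions supported away from the poles, using the Hardy-type inequality of \cite{LW99} to obtain uniform $\mathcal H_a$-bounds, and extracting weak limits) yields, along a subsequence $a\to0^+$ and up to a global unimodular phase, that $\varphi_a\to u_N$ in $L^2(\Omega)$ and in $C^1_{\rm loc}(\overline\Omega\setminus\{0\})$.

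Fix $r_0>0$ so small that $\overline{D_{r_0}}\subset\Omega$ and $|u_N|\ge c_0>0$ on $\overline{D_{r_0}}$; this is possible because $u_N(0)\neq 0$ and $u_N$ is continuous. For any $\eps\in(0,r_0)$, the $C^1_{\rm loc}$-convergence away from $0$ implies that $|\varphi_a|>0$ on $\overline{D_{r_0}}\setminus D_\eps$ for all $a$ sufficiently small, so that $\mathcal N(\varphi_a)\cap\overline{D_{r_0}}\subset \overline{D_\eps}$. I then define $K_a$ as the connected component of $\mathcal N(\varphi_a)\cap \overline{D_{r_0}}$ that contains $a^+$. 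By Theorem \ref{thmNodalLoc}(iii), at each of $a^\pm$ an \emph{odd} number of nodal half-curves meet; since none of these arcs can reach $\partial D_{r_0}$ (where $|\varphi_a|>0$) nor $\partial\Omega$, they must either terminate at the other pole or meet at interior cross-points described by Theorem \ref{thmNodalLoc}(ii). A parity argument (the total number of half-arcs entering a finite subgraph of $\mathcal N(\varphi_a)$ at cross-points is even by (ii), while at $a^\pm$ it is odd by (iii)) forces the component through $a^+$ to also contain $a^-$. Thus $K_a$ is a compact connected subset of $\overline{D_\eps}$ with $\{a^-,a^+\}\subset K_a$, which already gives the concentration property.

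Next, I apply the gauge-invariance machinery on the open connected set $\Omega\setminus K_a$. For any closed path $\gamma$ in $\Omega\setminus K_a$, the connectedness of $K_a$ and the fact that $K_a$ contains both poles imply $\mathop{\rm ind}_\gamma(a^+)=\mathop{\rm ind}_\gamma(a^-)$, so by \eqref{eqWinding} the circulation of ${\mathbf A}_{a^-\!,a^+}$ along $\gamma$ lies in $\Z$. Lemma \ref{lemEquivPot} then produces a gauge function $\varphi$ on $\Omega\setminus K_a$ sending ${\mathbf A}_{a^-\!,a^+}$ to $0$, and Proposition \ref{p:unieq} shows that $H_{{\mathbf A}_{a^-\!,a^+}}^{\Omega\setminus K_a}$ is unitarily equivalent, via $u\mapsto \varphi u$, to the Dirichlet Laplacian $-\Delta$ on $\Omega\setminus K_a$. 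Since $\varphi_a$ is magnetic-real and $K\varphi_a=\varphi_a$ with $K$ built from a gauge function for $2{\mathbf A}_{a^-\!,a^+}$, a short computation (as at the end of the proof of Theorem \ref{thmNodalLoc}) shows $\tilde\varphi_a:=\varphi\,\varphi_a$ is real-valued; moreover $\tilde\varphi_a\in H^1_0(\Omega\setminus K_a)$ because $\varphi_a$ vanishes on $\partial\Omega$ and on $K_a\subset\mathcal N(\varphi_a)$. Hence $\lambda_N^a$ is a Dirichlet eigenvalue of $-\Delta$ on $\Omega\setminus K_a$.

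It remains to identify $\lambda_N^a$ as the $N$-th such eigenvalue. Writing $\mu_j^a:=\lambda_j(\Omega\setminus K_a)$, domain monotonicity gives $\mu_j^a\ge \lambda_j$ for every $j$; conversely, the concentration $K_a\subset D_\eps$ and a standard capacity argument (or Theorem \ref{t:one} for $j=N$ together with an analogous statement for $j\ne N$) yield $\mu_j^a\to \lambda_j$ as $a\to 0^+$. Since $\lambda_{N-1}<\lambda_N<\lambda_{N+1}$, for $a$ sufficiently small $\mu_{N-1}^a$ and $\mu_{N+1}^a$ stay strictly away from $\lambda_N$, while $\lambda_N^a\to\lambda_N$; being a Dirichlet eigenvalue of $\Omega\setminus K_a$, $\lambda_N^a$ must coincide with $\mu_N^a$, which proves the theorem. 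The main obstacle in the whole argument is Step 4: guaranteeing that the ``nodal piece'' $K_a$ is connected \emph{and} contains both poles, for which Theorem \ref{thmNodalLoc} together with the $C^1_{\rm loc}$-convergence $\varphi_a\to u_N$ away from $0$ is essential.
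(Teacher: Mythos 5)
Your overall strategy is the same as the paper's: take a magnetic-real eigenfunction $u_N^a$, extract from its nodal set a compact connected $K_a$ containing both poles and concentrating at $0$, use \eqref{eqWinding} and Lemma \ref{lemEquivPot} to gauge away ${\mathbf A}_{a^-\!,a^+}$ on $\Omega\setminus K_a$, and finally identify the label $k(a)=N$ via convergence of the Dirichlet eigenvalues of $\Omega\setminus K_a$ to those of $\Omega$. Your handshake-lemma (parity) argument showing that the nodal component through $a^+$ must also contain $a^-$ is correct and is a neat substitute for part of the paper's Euler-formula computation. The non-vanishing of $u_N^a$ on the annulus (your Step with $C^1_{\rm loc}$ convergence, the paper's Lemma \ref{lemRing}) and the final labeling step also match.

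There is, however, one genuine gap: you write ``I apply the gauge-invariance machinery on the open \emph{connected} set $\Omega\setminus K_a$'', but nothing in your construction guarantees that $\Omega\setminus K_a$ is connected. Your parity argument only shows that $a^-$ and $a^+$ lie in the same component of the nodal set; it does not exclude that this component contains cycles, i.e.\ that $K_a$ encloses one or more bounded regions. If it did, $\Omega\setminus K_a$ would be disconnected, and Lemma \ref{lemEquivPot} and Proposition \ref{p:unieq} (stated for connected $D$) could not be applied directly; the eigenvalue labeling would also need extra care. This is exactly the point the paper addresses in Lemma \ref{l:single_curve}: choosing $r<\sqrt{\lambda_1(D_1)/\Lambda}$ with $\Lambda=\max_{a}\lambda_N^a$, any bounded face of the nodal graph would be a nodal domain $\omega\subset D_r$, and the chain $\lambda_N^a\ge\lambda_1(\omega,a^-,a^+)\ge\lambda_1(\omega)\ge\lambda_1(D_r)=\lambda_1(D_1)/r^2>\Lambda$ (diamagnetic inequality plus domain monotonicity) gives a contradiction; Euler's formula then forces the nodal set near $0$ to be a single arc from $a^-$ to $a^+$, so that $\Omega\setminus K_a$ is indeed connected. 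Your argument can be repaired either by inserting this eigenvalue comparison, or by working component by component (noting that any bounded component of $\Omega\setminus K_a$ sits inside $D_\eps$ and hence has first Dirichlet eigenvalue at least $\lambda_1(D_1)/\eps^2$, so it does not contribute to the low part of the spectrum), but as written the connectedness of $\Omega\setminus K_a$ is an unproved assumption on which the rest of your proof relies.
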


 We will divide the proof into two lemmas.
 
 \begin{Lemma}\label{lemRing}
   Let $R>0$ be such that $\overline{D_R}\subset \Omega$ and
   $u_N(x)\neq 0$ for all $x\in \overline{D_R}$. Let $r \in (0,R)$.
   We denote by $C_{r,R}$ the closed ring
   \[
   C_{r,R}=\{x\in \R^2: r\le|x|\le R\}.
   \]
   There exists $\delta>0$ such that, if $0<a<\delta$ and
   if $u$ is a magnetic-real eigenfunction associated with
   $\lambda_N^a$, then $u$ does not vanish in
   $C_{r,R}$.
\end{Lemma}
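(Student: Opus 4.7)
The plan is to argue by contradiction. Suppose the conclusion fails, so that there exist sequences $a_n\to 0^+$, magnetic-real eigenfunctions $u_n$ of $H_{a_n^-\!,a_n^+}^{\Omega}$ associated with $\lambda_N^{a_n}$ with $\|u_n\|_{L^2(\Omega)}=1$, and points $x_n\in C_{r,R}$ such that $u_n(x_n)=0$. Up to a subsequence, $x_n\to x_0\in C_{r,R}$. The goal is to prove that $|u_n|\to|u_N|$ uniformly on $C_{r,R}$, which will contradict $u_n(x_n)=0$ since $\min_{C_{r,R}}|u_N|>0$ by continuity of $u_N$ and the hypothesis on $R$.

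Fix $r'\in(0,r)$ with $\overline{D_{r'}}\subset\Omega$; for $n$ large we have $a_n^\pm\in D_{r'}$. Set $\Omega_{r'}:=\Omega\setminus\overline{D_{r'}}$, which is open, connected, and contains $C_{r,R}$. Any closed path $\gamma$ in $\Omega_{r'}$ encircles $a_n^+$ and $a_n^-$ the same number of times (both poles lie inside $D_{r'}$), so by \eqref{eqWinding} the circulation of $\mathbf A_{a_n^-\!,a_n^+}$ along $\gamma$ vanishes. Lemma \ref{lemEquivPot} then furnishes a smooth gauge function $\varphi_n$ on $\Omega_{r'}$ with $i\nabla\varphi_n/\varphi_n=\mathbf A_{a_n^-\!,a_n^+}$. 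As in the proof of Theorem \ref{thmNodalLoc} we choose the branch $\varphi_n=\exp(-\tfrac{i}{2}\arg\psi_n)$, with $\psi_n$ the global gauge for $2\mathbf A_{a_n^-\!,a_n^+}$ on $\dot\Omega$, so that $w_n:=\varphi_n u_n$ is real-valued and satisfies $-\Delta w_n=\lambda_N^{a_n} w_n$ in $\Omega_{r'}$ with $w_n=0$ on $\partial\Omega$. Normalizing $\varphi_n$ at a fixed base point $x_\star\in\Omega_{r'}$ and using that $\mathbf A_{a_n^-\!,a_n^+}\to 0$ uniformly on $\Omega_{r'}$ (the two Aharonov--Bohm contributions cancel in the limit), one obtains $\varphi_n\to 1$ uniformly on compact subsets of $\Omega_{r'}$.

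Since $\|w_n\|_{L^2(\Omega_{r'})}\le 1$ and $\lambda_N^{a_n}$ is bounded, interior elliptic regularity gives uniform $C^{1,\alpha}$-bounds for $w_n$ on every compact $K\subset\Omega_{r'}$. By Arzel\`a--Ascoli and diagonal extraction, up to a further subsequence $w_n\to w$ in $C^1_{\mathrm{loc}}(\Omega_{r'})$ with $w$ real-valued and $-\Delta w=\lambda_N w$. Combined with $\varphi_n\to 1$, this yields $u_n=\overline{\varphi_n}\,w_n\to w$ uniformly on compact subsets of $\Omega_{r'}$, and in particular on $C_{r,R}$.

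It remains to identify $w$ on $\Omega_{r'}$ with $\pm u_N$. By the diamagnetic inequality, $\bigl\||u_n|\bigr\|_{H^1_0(\Omega)}^2\le\|(i\nabla+\mathbf A_{a_n^-\!,a_n^+})u_n\|_{L^2}^2=\lambda_N^{a_n}$, so $|u_n|$ is bounded in $H^1_0(\Omega)$; up to subsequence $|u_n|\to v$ strongly in $L^2(\Omega)$ and weakly in $H^1_0(\Omega)$, with $\|v\|_{L^2(\Omega)}=1$. Letting $r'\to 0$ along a diagonal extraction, the local limits $w$ paste into a real-valued classical solution $u_\infty$ of $-\Delta u_\infty=\lambda_N u_\infty$ on $\Omega\setminus\{0\}$ with $|u_\infty|=v$. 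Since the nodal set of $u_\infty$ has Lebesgue measure zero and $\nabla|u_\infty|=\mathrm{sign}(u_\infty)\nabla u_\infty$ on $\{u_\infty\neq 0\}$, it follows that $\nabla u_\infty\in L^2(\Omega\setminus\{0\})$; as $\{0\}$ has zero $H^1$-capacity in $\R^2$, $u_\infty$ extends to $H^1_0(\Omega)$ and is a Dirichlet-Laplacian eigenfunction for $\lambda_N$. Simplicity of $\lambda_N$ together with $\|u_\infty\|_{L^2}=1$ force $u_\infty=\pm u_N$, giving $|u_n|\to|u_N|$ uniformly on $C_{r,R}$ and a contradiction. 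The main obstacle is precisely this last step: identifying the piecewise-defined $C^1_{\mathrm{loc}}$-limit $w$ with $\pm u_N$ globally, which requires both the diamagnetic bound to control $|u_n|$ in $H^1_0(\Omega)$ and the removable-singularity argument at the collision point $0$.
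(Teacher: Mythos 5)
Your proof is correct and follows the same contradiction skeleton as the paper's (assume a sequence $a_n\to0^+$ with magnetic-real eigenfunctions vanishing at points $x_n\in C_{r,R}$, show the eigenfunctions converge uniformly on the ring to $u_N$ up to a phase, and contradict $\min_{C_{r,R}}|u_N|>0$), but you take a genuinely more self-contained route on the key input. The paper simply cites \cite[Section III]{lena} for the $L^2(\Omega)$-convergence of suitably normalized eigenfunctions to $u_N$ and then upgrades to uniform convergence on a neighborhood of $C_{r,R}$ by uniform elliptic regularity; you instead reprove that convergence from scratch: a gauge transformation on $\Omega\setminus\overline{D_{r'}}$ (where the circulation of ${\mathbf A}_{a^-\!,a^+}$ vanishes) turns $u_n$ into real Laplace eigenfunctions, interior elliptic estimates give $C^1_{\rm loc}$ compactness away from $0$, and the diamagnetic inequality plus Rellich and a zero-capacity/removable-singularity argument at the collision point identify the limit with a unimodular multiple of $u_N$ via simplicity of $\lambda_N$. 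This buys independence from the eigenfunction-convergence result of \cite{lena} (you still use the eigenvalue convergence \eqref{eq:16} to bound $\lambda_N^{a_n}$ and identify the limit equation), at the cost of length and of a few standard details left implicit: connectedness of $\Omega\setminus\overline{D_{r'}}$ (true by unicoherence of the plane), the consistency of the gauges and of the phase normalization as $r'\to0$ so that the pasted limit is a fixed unimodular multiple of a real eigenfunction, and the fact that $u\in H^1(\Omega)$ together with $|u|\in H^1_0(\Omega)$ forces $u\in H^1_0(\Omega)$. None of these is a gap, and you correctly identify the identification of the local limit with $\pm u_N$ as the crux.
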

 
 \begin{proof}
   Let us assume, by contradiction, that there exists a sequence
   $a_n\to0^+$ such that, for all $n\ge 1$,
   $\lambda_N^{a_n}$ admits an eigenfunction $\varphi_n$ which
   vanishes somewhere in $C_{r,R}$. Let us denotes by $X_n$ a zero of $\varphi_n$
   in $C_{r,R}$.
 	
   According to \cite[Section III]{lena}, we can assume, up to
   extraction and a suitable normalization of $\varphi_n$, that
   $\varphi_n \to u_N$ in $L^2(\Omega)$.
 Since $H$ is a uniformly
   regular elliptic operator in a neighborhood of $C_{r,R}$, $\varphi_n$
   converges to $u_N$ uniformly on $C_{r,R}$.
Furthermore, up to
   one additional extraction, we can assume that
   $X_n\to X_{\infty}\in C_{r,R}$. This implies that
   $u_N(X_{\infty})=0$, contradicting the fact that $u_N(x)\neq 0$ for all $x\in \overline{D_R}$.
 \end{proof}
 
\begin{Lemma}\label{l:single_curve}
   For all $R>0$
  such that $\overline{D_R}\subset \Omega$
   and $u_N(x)\neq 0$ for all $x\in \overline{D_R}$, there exists
   $\delta>0$ such that, if $0<a<\delta$ and if $u_N^a$ is a
   magnetic-real eigenfunction associated with
   $\lambda_N^a$, then $\mathcal{N}(u_N^a)\cap D_R$ consists
   in a single regular curve connecting $a^-$ and $a^+$.
\end{Lemma}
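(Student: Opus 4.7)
The plan is to first localize the nodal set inside an arbitrarily small disk via Lemma \ref{lemRing}, and then rule out any configuration other than a single connecting arc by combining gauge invariance with an eigenvalue bound on small disks.

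Fix $r\in(0,R)$ so small that $\lambda_1(D_r)>\lambda_N+1$; such an $r$ exists because the first Dirichlet eigenvalue of a disk blows up as the radius tends to $0$. By Lemma \ref{lemRing} applied in the ring $C_{r,R}$, there exists $\delta>0$ such that for every $a\in(0,\delta)$ the magnetic-real eigenfunction $u_N^a$ does not vanish in $C_{r,R}$, so that $\mathcal{N}(u_N^a)\cap D_R\subset D_r$ and $u_N^a$ is nowhere zero on $\partial D_r$. Theorem \ref{thmNodalLoc} then identifies $\mathcal{N}(u_N^a)\cap D_r$ as a compact one-dimensional complex whose vertices are the poles $a^\pm$ (with an odd number $2k_\pm+1\geq 1$ of branches emanating from each) and possibly finitely many interior cross-points (each with an even number $\geq 4$ of branches), and whose edges are regular arcs contained strictly inside $D_r$.

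The key step is to prove that, for $a$ small, $D_r\setminus\mathcal{N}(u_N^a)$ has no bounded connected component. Assume for contradiction that $W$ is such a component. Then $W\subset D_r$ is open, $\partial W\subset\mathcal{N}(u_N^a)$, and although $a^\pm$ may belong to $\partial W$ they do not lie in the interior of $W$. Any closed path $\gamma$ contained in $W$ bounds a disk in $\R^2$ whose interior does not meet $\{a^+,a^-\}$, so $\mathop{\rm ind}_\gamma(a^\pm)=0$ and by \eqref{eqWinding} the flux $\oint_\gamma\mathbf{A}_{a^-,a^+}\cdot d\mathbf{s}$ vanishes. Lemma \ref{lemEquivPot} then produces a gauge function $\varphi:W\to\C$ sending $\mathbf{A}_{a^-,a^+}|_W$ to $0$, and the transformed function $v=\varphi u_N^a$ is real-valued (because $u_N^a$ is magnetic-real), satisfies $-\Delta v=\lambda_N^a v$ in $W$ with $v=0$ on $\partial W$, and has $|v|=|u_N^a|>0$ throughout $W$. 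Being sign-definite on the connected open set $W$, $v$ is a first Dirichlet eigenfunction, so by domain monotonicity $\lambda_N^a=\lambda_1(W)\geq\lambda_1(D_r)>\lambda_N+1$, contradicting the convergence $\lambda_N^a\to\lambda_N$ recalled in \eqref{eq:16}.

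From the absence of bounded faces I can then read off the topology of $\mathcal{N}(u_N^a)\cap D_r$. A cross-point would force, together with the arcs issuing from it, at least one bounded face; a closed loop based at a single pole would enclose a bounded face; and two distinct regular arcs joining $a^-$ to $a^+$ would themselves bound a bounded face. Hence none of these configurations can occur. On the other hand, the odd-degree condition at each pole requires at least one arc to emanate from each, so $\mathcal{N}(u_N^a)\cap D_r$ must reduce to a single regular arc connecting $a^-$ and $a^+$, which, together with $\mathcal{N}(u_N^a)\cap D_R\subset D_r$, gives the statement. The most delicate point of this plan is the verification that a bounded component $W$ touching a pole on its boundary is still simply connected in the complement of $\{a^+,a^-\}$, so that Lemma \ref{lemEquivPot} applies: this relies on the local description in Theorem \ref{thmNodalLoc}(iii), which shows that $W$ occupies a single sector between two adjacent nodal branches at the pole and therefore does not wind around it.
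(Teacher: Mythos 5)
Your overall strategy coincides with the paper's: confine $\mathcal N(u_N^a)\cap D_R$ to a small disk $D_r$ via Lemma \ref{lemRing} with $r$ chosen by an eigenvalue comparison, exclude bounded faces of the nodal graph by showing that such a face would be a nodal domain whose first eigenvalue exceeds $\lambda_N^a$, and then conclude by a combinatorial argument on degrees (the paper formalizes this last step with Euler's formula $\mu=b_1+\sum_v(\nu(v)/2-1)+1$; your case-by-case exclusion of cross-points, loops and double arcs is the same counting, just less packaged). However, the way you execute the bounded-face exclusion has a genuine gap. You pass through a gauge transformation on the face $W$, and to invoke Lemma \ref{lemEquivPot} you assert that every closed path $\gamma\subset W$ ``bounds a disk whose interior does not meet $\{a^+,a^-\}$'', so that $\mathop{\rm ind}\nolimits_\gamma(a^\pm)=0$. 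This is not true in general, and the local sector structure of Theorem \ref{thmNodalLoc}(iii) at the poles does not imply it: winding is a global property, and a bounded face $W$ need not be simply connected in $\R^2\setminus\{a^+,a^-\}$ (think of an annular face whose bounded complementary component contains the piece of nodal set carrying the poles; loops in $W$ then wind around both poles). What is true, and what actually rescues the step, is that $\mathop{\rm ind}\nolimits_\gamma(a^+)=\mathop{\rm ind}\nolimits_\gamma(a^-)$ for every closed $\gamma\subset W$, so that the circulation in \eqref{eqWinding} is an integer (indeed zero) and \eqref{eqIntCirc} holds. This equality follows because $a^+$ and $a^-$ necessarily lie in the same connected component of the nodal graph: by Theorem \ref{thmNodalLoc} they are the only vertices of odd degree, and a finite graph component cannot contain exactly one odd-degree vertex (handshake lemma); hence the connected set of nodal lines through $a^+$ also contains $a^-$, lies in a single complementary component of $W$, and the two winding numbers coincide. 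You neither state nor prove this, and your stated justification would not survive the annular configuration above.

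The paper sidesteps all of this: it bounds $\lambda_N^a=\lambda_{k(a)}(\omega,a^-,a^+)\ge\lambda_1(\omega,a^-,a^+)\ge\lambda_1(\omega)\ge\lambda_1(D_r)$ directly via the diamagnetic inequality, with no gauge transformation and no topological input about $\omega$. If you want to keep your route, you must add the parity argument above (or restrict attention to a simply connected subregion of $W$); otherwise, replacing the gauge step by the diamagnetic inequality closes the gap and shortens the proof. The remaining ingredients of your proposal (the choice $\lambda_1(D_r)>\lambda_N+1$ combined with \eqref{eq:16}, and the final degree count) are correct.
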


\begin{proof} By continuity of
    $(a^-,a^+)\mapsto \lambda_N^a$ (see \cite{lena}), we have
    that 
\begin{equation}\label{eq:15}
  \Lambda=\max_{a\in[0,R]}\lambda_N^a\in(0,+\infty). 
 \end{equation}
    Let us choose $r\in(0,R)$ such that 
\begin{equation}\label{eq:13}
  r <\sqrt\frac{\lambda_1(D_1)}{\Lambda}, 
\end{equation} 
where $\lambda_1(D_1)$ is the 1-st eigenvalue of the Laplacian in the
unit disk $D_1$. According to Lemma \ref{lemRing} there exists
$\delta(r)>0$ such that, if $a<\delta(r)$, any eigenfunction
associated to $\lambda_N^a$ does not vanish in the closed ring
$C_{r,R}$.
  
  Let us assume that $0<a<\delta(r)$ and $a<r$ and let $u_N^a$ be an
  eigenfunction associated with $\lambda_N^a$. The proof relies on a
  topological analysis of $\mathcal N':=\mathcal N(u_N^a)\cap D_R$,
  inspired by previous work on nodal sets and minimal partitions (see
  \cite[Section 6]{BNH2015} and references therein). Lemma
  \ref{lemRing} implies that $\mathcal N'$ is compactly included in
  $D_r$.
 Theorem \ref{thmNodalLoc} implies that $\mathcal N'$ consists of a
 finite number of regular arcs connecting a finite number of singular
 points. In other words, $\mathcal N'$ is a regular planar graph. Let
 us denote by $V$ the set of vertices of $\mathcal N'$, by $b_1$ the
 number of its connected components and by $\mu$ the number of its
 faces.
 By face, we mean a connected component of $\RR^2\setminus\mathcal
 N'$. There is always one unbounded face, so $\mu \ge 1$. 
Furthermore, for all $v\in V$, we denote by $\nu(v)$ the degree of the
vertex $v$, that is to say the number of half-curves ending at $v$.
 Let us note that, according to Theorem \ref{thmNodalLoc}, both $a^-$
 and $a^+$
 belong to $V$ and have an odd degree, and any other vertex can only 
have an even degree. These quantities are related through Euler's formula for planar graphs:
\begin{equation}
\label{eq:Euler}
	\mu=b_1+\sum_{v\in V}\left(\frac{\nu(v)}{2}-1 \right)+1.
\end{equation}
For this classical formula, see for instance \cite[Theorems 1.1 and 9.5]{BM76}. Note that this reference treats the case of a connected graph. The generalization used here is easily obtained by linking the $b_1$ connected components of the graph with $b_1-1$ edges, in order to go back to the connected case.

Let us show by contradiction that $\mu =1$. If $\mu \ge 2$, there  exists a bounded face of the graph $\mathcal N'$, which is a nodal domain of $u_N^a$ entirely contained in
  $D_r$. Let us call it $\omega$. We denote by 
  $\lambda_k(\omega,a^-,a^+)$ the $k$-th eigenvalue of the
  operator $(i\nabla+\mathbf{A}_{a^-\!,a^+})^2$ in $\omega$, with
  homogeneous Dirichlet
  boundary condition on $\partial\omega$. Since $\omega$ is a nodal
  domain, we have that, for some $k(a)\in\N\setminus\{0\}$ depending on $a$,
\[
\lambda_N^a=\lambda_{k(a)}(\omega,a^-,a^+)\geq \lambda_{1}(\omega,a^-,a^+).
\]
By the diamagnetic inequality
\[
\lambda_{1}(\omega,a^-,a^+)\geq \lambda_{1}(\omega)
\]
where $\lambda_1(\omega)$ is the $1$-st eigenvalue of the Dirichlet Laplacian in
$\omega$. By domain monotonicity
\[
\lambda_1(\omega)\ge \lambda_1(D_r)=\frac{\lambda_1(D_1)}{r^2}.
\]
Hence we obtain that 
\begin{equation*}
  r\ge \sqrt\frac{\lambda_1(D_1)}{\lambda_N^a},
\end{equation*}
thus contradicting \eqref{eq:13}. We conclude that $\mu=1$.

Going back to Euler's formula \eqref{eq:Euler}, we obtain
\begin{equation}
\label{eq:EulerIneq}
	\sum_{v\in V}\left(\frac{\nu(v)}{2}-1 \right)=-b_1\le -1.
\end{equation}
According to Theorem \ref{thmNodalLoc}, we have $\nu(v)/2-1\ge -1/2$
if $v\in \{a^-,a^+\}$ and $\nu(v)/2-1\ge 1$ if
$v\in V\setminus \{a^-,a^+\}$. Inequality \eqref{eq:EulerIneq} can
therefore be satisfied only if $V=\{a^-,a^+\}$ and
$\nu(a^-)=\nu(a^+)=1$, that is to say if $\mathcal N'$ is a regular
arc connecting $a^-$ and $a^+$.  
\end{proof}

We are now in position to prove Theorem \ref{t:reduct-dirichl-lapl}.

\begin{proof}[Proof of Theorem \ref{t:reduct-dirichl-lapl}]
From Lemma \ref{l:single_curve} it follows that, for $a$ sufficiently
small,  there exists  a curve $K_a$ in $\mathcal{N}(u_N^a)$ connecting
$a^-$ and $a^+$ and (in view of Lemma \ref{lemRing}) concentrating at $0$, where 
$u_N^a$ is a
   magnetic-real eigenfunction associated with
   $\lambda_N^a$.

Let us write $\Omega'_a=\Omega\setminus K_a$. Since $K_a$ is contained
in $\mathcal{N}(u_N^a)$, we have that there exists
$k(a)\in\N\setminus\{0\}$ (depending on $a$) such that  
\begin{equation}\label{eqMinusC}
	\lambda_N^a=\lambda_{k(a)}({\Omega'_a},a^-,a^+),
\end{equation}
where $\lambda_{k(a)}({\Omega'_a},a^-,a^+)$ denotes the $k(a)$-th
eigenvalue of $H_{a^-\!,a^+}^{\Omega'_a}$.

Let us consider a closed path $\gamma$ in $\Omega'_a$.  By definition
of $\Omega'_a$, $\gamma$ does not meet $K_a$, which means that $K_a$
is contained in a connected component of $\R^2\setminus \gamma$. Since
the function $X\mapsto \mbox{Ind}_{\gamma}(X)$ is constant on all
connected components of $\R^2\setminus \gamma$, we have that
$\mbox{Ind}_{\gamma}(a^-)=\mbox{Ind}_{\gamma}(a^+)$. According to
\eqref{eqWinding}, this implies that
\begin{equation*}
  \frac{1}{2\pi}\oint_{\gamma}{\mathbf A}_{a^-\!,a^+}\cdot d\mathbf{s}=0.
\end{equation*}
In view Lemma \ref{lemEquivPot}, we conclude that
${\mathbf A}_{a^-\!,a^+}$ is gauge equivalent to $0$ in $\Omega'_a$  and hence
Proposition \ref{p:unieq} ensures that 
\begin{equation}
	\label{eqGaugeZero}
\lambda_{k(a)}({\Omega'_a},a^-,a^+)=\lambda_{k(a)}({\Omega'_a}).
\end{equation}
Combining \eqref{eqMinusC} and \eqref{eqGaugeZero} we obtain 
\begin{equation}\label{eq:14}
\lambda_N^a=\lambda_{k(a)}({\Omega'_a}).
\end{equation}
We observe that $a\mapsto k(a)$ stays bounded
as $a\to0^+$; indeed if, by contradiction,
$k(a_n)\to+\infty$ along some sequence
$a_n\to 0^+$, by \eqref{eq:14} we should have 
$\lambda_N^{a_n}=\lambda_{k(a_n)}({\Omega'_{a_n}})\geq
\lambda_{k(a_n)}(\Omega)\to+\infty$ thus contradicting \eqref{eq:15}.

Then, for any sequence $a_n\to 0^+$, there exists a subsequence $a_{n_j}$
such that $k(a_{n_j})\to k$ for some $k$; since $k(a)$ is integer-valued
we have that necessarily  $k(a_{n_j})= k\in\N\setminus\{0\}$ for $j$
sufficiently large. Hence \eqref{eq:14} yields
$\lambda_N^{a_{n_j}}=\lambda_{k}({\Omega\setminus K_{a_{n_j}}})$. It is
well known (see e.g. 
\cite[Theorem 1.2]{Courtois1995Holes})
  that $\lambda_{k}({\Omega\setminus K_{a_{n_j}}})\to
\lambda_{k}(\Omega)$ as $j\to +\infty$; hence, taking into account
\eqref{eq:16}, we conclude that $k=N$. Moreover, since the limit of
$k(a_{n_j})$ does not depend on the subsequence and $a\mapsto k(a)$ is
integer-valued, we conclude that $k(a)=N$ for all $a$ sufficiently
small, so that \eqref{eq:14} becomes 
\[
\lambda_N^a=\lambda_{N}({\Omega'_a})
\]
and the proof is complete.
\end{proof}

\section{Proof of Theorem \ref{t:main}}\label{sec:proof-theor-reft:m}

We are in position to complete the proof of  Theorem \ref{t:main}.

\begin{proof}[Proof of Theorem \ref{t:main}]
For $a>0$ small, let $K_a\subset\Omega$ be as in Theorem
\ref{t:reduct-dirichl-lapl}. We denote as
\[
d_a:=\mathop{\rm diam}K_a
\]
the diameter of $K_a$. From Theorem  \ref{t:one} it follows that 
\[
\lambda_{N}(\Omega\setminus K_a)-\lambda_{ N}= u_N^2(0)
\dfrac{2\pi}{|\log d_a|} +
o\bigg(\dfrac{1}{|\log d_a|}\bigg), \quad \text{as }a\to 0^+.
\]
Hence, in view of Theorem \ref{t:reduct-dirichl-lapl}, 
\begin{equation}\label{eq:17}
\lambda_{N}^a-\lambda_{ N}= u_N^2(0)
\dfrac{2\pi}{|\log d_a|} +
o\bigg(\dfrac{1}{|\log d_a|}\bigg), \quad \text{as }a\to 0^+.
\end{equation}
From \eqref{eq:17} and Proposition \ref{propUpper} it follows that,
for every $\tau\in(0,1)$, 
\begin{equation*}
\dfrac{1}{|\log d_a|}\big(1+
o(1)\big)\le \frac{1}{(1-\tau)|\log a|}
\left(1+o(1)\right)
\end{equation*}
and then 
\begin{equation}\label{eq:18}
\dfrac{|\log a|}{|\log d_a|}\le 
\frac{1}{(1-\tau)}
\big(1+o(1)\big), \quad\text{as $a\to0^+$}.
\end{equation}
On the other hand, since $a^-,a^+\in K_a$, we have that $d_a\geq 2a$
so that $|\log a|\geq|\log d_a|+\log2$
and 
\begin{equation}\label{eq:19}
\dfrac{|\log a|}{|\log d_a|}\ge 1+O\left(\frac1{|\log
    d_a|}\right)=1+o(1) , \quad\text{as $a\to0^+$}.
\end{equation}
Combining \eqref{eq:18} and \eqref{eq:19} we conclude that 
\[
1\leq \liminf_{a\to 0^+}\dfrac{|\log a|}{|\log d_a|}\leq
\limsup_{a\to 0^+}\dfrac{|\log a|}{|\log d_a|}\leq
\frac{1}{(1-\tau)}
\]
for every $\tau\in(0,1)$, and then, letting $\tau\to0^+$, we
obtain that 
\begin{equation}\label{eq:20}
\lim_{a\to 0^+}\dfrac{|\log a|}{|\log d_a|}=1.
\end{equation}
The conclusion then follows from \eqref{eq:17} and \eqref{eq:20}.
\end{proof}

\end{document}